\documentclass{article}

\usepackage{amsmath,amssymb,amsfonts,amsthm,graphicx}

\newtheorem{theorem}{Theorem}[section]
\newtheorem{lemma}[theorem]{Lemma}
\numberwithin{equation}{section}

\theoremstyle{remark}
\newtheorem{remark}[theorem]{Remark}
\newtheorem*{note}{\bf Note}

\newcommand{\II}{\mathop{\mathrm{II}}\nolimits}
\newcommand{\Ric}{\mathop{\mathrm{Ric}}}
\newcommand{\dist}{\mathop{\mathrm{dist}}\nolimits}

\title{Gradient estimates for the heat equation under the Ricci flow}

\author{Mihai Bailesteanu\thanks{Department of Mathematics, Cornell University, 310~Malott
Hall, Ithaca,~NY 14853-4201, USA} \\ \small{\texttt{mbailesteanu@math.cornell.edu}} \and Xiaodong Cao\footnotemark[1] \\
\small{\texttt{cao@math.cornell.edu}} \and Artem
Pulemotov\footnotemark[1]~\thanks{Department of Mathematics, The
University of
Chicago, 5734~S.~University~Ave., Chicago,~IL 60637-1514, USA} \\
\small{\texttt{artem@math.uchicago.edu}}}

\oddsidemargin=0in \evensidemargin=0in \textwidth=6.5in
\topmargin=-.5in \textheight=9in

\begin{document}

\maketitle

\begin{abstract}
The paper considers a manifold~$M$ evolving under the Ricci flow and
establishes a series of gradient estimates for positive solutions of
the heat equation on~$M$. Among other results, we prove Li-Yau-type
inequalities in this context. We consider both the case where~$M$ is
a complete manifold without boundary and the case where~$M$ is a
compact manifold with boundary. Applications of our results include
Harnack inequalities for the heat equation on~$M$.
\end{abstract}

\section{Introduction}

The paper deals with a manifold $M$ evolving under the Ricci flow
and with positive solutions to the heat equation on $M$. We
establish a series of gradient estimates for such solutions
including several Li-Yau-type inequalities. First, we study the case
where $M$ is a complete manifold without boundary. Our results
contain estimates of both local and global nature. Second, we look
at the situation where $M$ is compact and has nonempty boundary
$\partial M$. We impose the condition that $\partial M$ remain
convex and umbilic at all times. Our arguments then yield two global
estimates.

Suppose $M$ is a manifold without boundary. Let
$\big(M,g(x,t)\big)_{t\in[0,T]}$ be a complete solution to the Ricci
flow
\begin{align}\label{intro_g}
\frac\partial{\partial t} g(x,t)=-2\Ric(x,t),\qquad x\in
M,~t\in[0,T].
\end{align}
We assume its curvature remains uniformly bounded for all
$t\in[0,T]$. Consider a positive function $u(x,t)$ defined on
$M\times[0,T]$. In Section~\ref{sec_no_bdy}, we assume $u(x,t)$
solves the equation
\begin{align}\label{intro_u}
\left(\Delta-\frac\partial{\partial t}\right)u(x,t)=0,\qquad x\in
M,~t\in[0,T].
\end{align}
The symbol $\Delta$ here stands for the Laplacian given by $g(x,t)$.
It is important to emphasize that $\Delta$ depends on the parameter
$t$. Thus, we look at the Ricci flow~\eqref{intro_g} combined with
the heat equation~\eqref{intro_u}. Note that formula~\eqref{intro_g}
provides us with additional information about the coefficients of
the operator $\Delta$ appearing in~\eqref{intro_u} but is itself
fully independent of~\eqref{intro_u}. To learn about the history,
the intuitive meaning, the technical aspects, and the applications
of the Ricci flow, one should refer to the many quality books on the
subject such as, for
example,~\cite{BCPLLN06,PT06,JMGT07,BCetal07,BCetal08}.

Problem~\eqref{intro_g} combined with~\eqref{intro_u} admits a
simple interpretation in terms of the process of heat conduction.
More specifically, one may think of the manifold $M$ with the
initial metric $g(x,0)$ as an object having the temperature
distribution $u(x,0)$. Suppose we let $M$ evolve under the Ricci
flow and simultaneously let the heat spread on $M$. Then the
solution $u(x,t)$ will represent the temperature of $M$ at the
point~$x$ at time~$t$. The work~\cite{MAKCAT08} provides a
probabilistic interpretation of~\eqref{intro_g}--\eqref{intro_u}. In
particular, it constructs a Brownian motion related to $u(x,t)$.

The study of system~\eqref{intro_g}--\eqref{intro_u} arose from
R.~Hamilton's paper~\cite{RH95a}. The original idea in~\cite{RH95a}
was to investigate the Ricci flow combined with the heat flow of
harmonic maps. The system we examine in Section~\ref{sec_no_bdy} may
be viewed as a special case. The idea to consider the Ricci flow
combined with the heat flow of harmonic maps was further exploited
in~\cite{MS02,MS05} for the purposes of regularizing non-smooth
Riemannian metrics.  We point out, without a deeper explanation,
that looking at the two evolutions together leads to interesting
simplifications in the analysis.

After its conception in~\cite{RH95a}, the study
of~\eqref{intro_g}--\eqref{intro_u} was pursued
in~\cite{CG02,LN04,QZ06,MAKCAT08,XCRH09}. A large amount of work was
done to understand several problems that are similar
to~\eqref{intro_g}--\eqref{intro_u} in one way or another. The list
of relevant references includes but is not limited
to~\cite{QZ06,XC08,XCRH09} and~\cite[Chapter~16]{BCetal08}. For
instance, there are substantial results concerning the Ricci flow
combined with the conjugate heat equation. The connection of this
problem to~\eqref{intro_g}--\eqref{intro_u} is beyond superficial.
Q.~Zhang used a gradient estimate
for~\eqref{intro_g}--\eqref{intro_u} to prove a Gaussian bound for
the conjugate heat equation in~\cite{QZ06}. The results of the
present paper may have analogous applications.

System~\eqref{intro_g}--\eqref{intro_u} could serve as a model for
researching the Ricci flow combined with the heat flow of harmonic
maps. There are other geometric evolutions for
which~\eqref{intro_g}--\eqref{intro_u} plays the same role. One
example is the Ricci Yang-Mills flow; see~\cite{DJ08,JSsubm,AYsubm}.
The analysis of this evolution is technically complicated. Its
properties are not yet well understood. We expect that investigating
the simpler model case of system~\eqref{intro_g}--\eqref{intro_u}
will provide insight on the behavior of the Ricci Yang-Mills flow.
We also speculate that the results of the present paper may aid in
proving relevant existence theorems; cf.~\cite{MARBAT02,AP08} and
also~\cite{LS87,LGNCpre}.

The scalar curvature of a surface which evolves under the Ricci flow
satisfies the heat equation with a potential on that surface. In the
same spirit, we expect to find geometric quantities on $M$ that
obey~\eqref{intro_g}--\eqref{intro_u}. The gradient estimates in
this paper would then lead to new knowledge about the behavior of
the metric $g(x,t)$ under the Ricci flow. In particular, we believe
that our results will be helpful in classifying ancient solutions
of~\eqref{intro_g}. L.~Ni's work~\cite{LN04} offers yet another way
to use the Ricci flow combined with the heat equation to study the
evolution of~$g(x,t)$.

Subsection~\ref{sec_sp-only} discusses space-only gradient estimates
for system~\eqref{intro_g}--\eqref{intro_u}. The first predecessor
of these results was obtained by R.~Hamilton in the
paper~\cite{RH93}. It applies to the case where $M$ is a closed
manifold, the metric $g(x,t)$ is independent of~$t$, and
equation~\eqref{intro_g} is not in the picture. New versions of
R.~Hamilton's result were proposed
in~\cite{PSQZ06,QZ06,XC08,XCRH09}. The beginning of
Section~\ref{sec_no_bdy} describes them thoroughly. For related work
done by probabilistic methods, one should
consult~\cite[Chapter~5]{EH02} and~\cite{MAKCAT08}.
Theorem~\ref{thm_sp-only-local} states a space-only gradient
estimate for~\eqref{intro_g}--\eqref{intro_u}. It is a result of
local nature.

Subsection~\ref{sec_sp-time} deals with space-time gradient
estimates for~\eqref{intro_g}--\eqref{intro_u}. Our results resemble
the Li-Yau inequalities from the paper~\cite{PLSTY86}; see
also~\cite[Chapter~IV]{RSSTY94}. More precisely, the solution
$u(x,t)$ of equation~\eqref{intro_u} satisfies
\begin{align}\label{intro_LiYau}
\frac{|\nabla u|^2}{u^2}-\frac{u_t}u\le\frac n{2t}\,,\qquad x\in
M,~t\in(0,T],
\end{align}
if $M$ is a closed manifold with nonnegative Ricci curvature, the
metric $g(x,t)$ does not depend on~$t$, and~\eqref{intro_g} is not
assumed. Here, $\nabla$ stands for the gradient, the subscript~$t$
denotes the derivative in~$t$, and $n$ is the dimension of $M$. This
result goes back to~\cite{PLSTY86} and constitutes the simplest
Li-Yau inequality. It opened new possibilities for the comparison of
the values of solutions of~\eqref{intro_u} at different points and
led to important Gaussian bounds in heat kernel analysis.
Integrating the above estimate along a space-time curve yields a
Harnack inequality. A precursory form of~\eqref{intro_LiYau}
appeared in~\cite{DAPB79}. Many variants of~\eqref{intro_LiYau} now
exist in the literature; see,
e.g.,~\cite{JL91,BCRH97,DBZQ99,JLXX09}. R.~Hamilton proved one
in~\cite{RH93} which further extended our ability to compare the
values of solutions of~\eqref{intro_u}. Li-Yau inequalities served
as prototypes for many estimates connected to geometric flows. The
list of relevant references includes but is not limited
to~\cite{BC92,RH95b,BCetal08}. In particular, the Li-Yau-type
inequality for the Ricci flow became one of the central tools in
classifying ancient solutions to the flow as detailed
in~\cite[Chapter~9]{BCPLLN06}. Analogous results played a
significant part in the study of K\"{a}hler manifolds;
see~\cite[Chapter~2]{BCetal07}. Our Theorems~\ref{thm_sp-tm-local}
and~\ref{thm_sp-tm-global} establish space-time gradient estimates
for~\eqref{intro_g}--\eqref{intro_u}. As an application, we lay down
two Harnack inequalities for~\eqref{intro_g}--\eqref{intro_u}. They
help compare the values of a solution at different points. We are
also hopeful that the techniques in Subsection~\ref{sec_sp-time}
will lead to the discovery of new informative Li-Yau-type
inequalities related to the Ricci flow and other geometric flows.
Our investigation of~\eqref{intro_g}--\eqref{intro_u} would then be
a model for the proof of such inequalities.

In Section~\ref{sec_mf_w_bdy}, we consider the case where $M$ is a
compact manifold and $\partial M\ne\emptyset$. We impose the
boundary condition on the Ricci flow~\eqref{intro_g} by demanding
that the second fundamental form~$\II(x,t)$ of the boundary with
respect to $g(x,t)$ satisfy
\begin{align}\label{intro_II}
\II(x,t)=\lambda(t)g(x,t),\qquad x\in\partial M,~t\in[0,T],
\end{align}
for some nonnegative function~$\lambda(t)$ defined on $[0,T]$. Thus,
$\partial M$ must remain convex and umbilic\footnote{There is
ambiguity in the literature as to the use of the term ``umbilic" in
this context. See the discussion in~\cite{JC09}.} for all
$t\in[0,T]$. We then assume $u(x,t)$ solves the heat
equation~\eqref{intro_u} and satisfies the Neumann boundary
condition
\begin{align}\label{intro_nu}
\frac\partial{\partial\nu}u(x,t)=0,\qquad x\in\partial M,~t\in[0,T].
\end{align}
The outward unit normal $\frac\partial{\partial\nu}$ is determined
by the metric $g(x,t)$ and, therefore, depends on the parameter~$t$.

The Ricci flow on manifolds with boundary is not yet deeply
understood. We remind the reader that equation~\eqref{intro_g} fails
to be strictly parabolic. As a consequence, it is not even clear how
to impose the boundary conditions on~\eqref{intro_g} to obtain a
well-posed problem. Progress in this direction was made by Y.~Shen
in the paper~\cite{YS96}. He proposed to consider the Ricci flow on
a manifold with boundary assuming formula~\eqref{intro_II} holds
with $\lambda(t)$ identically equal to a constant. Furthermore, he
managed to prove the short-time existence of solutions to the flow
in this case. The work~\cite{JC09} continues the investigation of
problem~\eqref{intro_g} subject to~\eqref{intro_II} with
$\lambda(t)$ equal to a constant. It also contains a complete set of
references on the subject. In the present paper, we consider a more
general situation by allowing $\lambda(t)$ to depend on the
parameter~$t$ nontrivially. Note that Y.~Shen's method of proving
the short-time existence applies to this case, as well.

Subsection~\ref{subsec_RFMWB} ponders on the geometric meaning of
the function $\lambda(t)$. We explain why it is beneficial to let
$\lambda(t)$ depend on~$t$. The discussion is rather informal.
Subsection~\ref{subsec_GEMWB} provides gradient estimates for
system~\eqref{intro_g}--\eqref{intro_u} subject to the boundary
conditions~\eqref{intro_II}--\eqref{intro_nu}.
Theorems~\ref{thm_space_bdy} and~\ref{thm_LY_bdy} state versions of
inequalities from Theorems~\ref{thm_sp-only-global}
and~\ref{thm_sp-tm-global}. Related work was done
in~\cite{PLSTY86,JW97,DBZQ99,AP08}. Note that
Theorem~\ref{thm_space_bdy} appears to be new in the case where
$\partial M$ is nonempty even if $g(x,t)$ is independent of~$t$ (see
Remark~\ref{rem_sp-on-bdy-ind-t} for the details). At the same time,
the proof is not particularly complicated.

Theorems~\ref{thm_space_bdy} and~\ref{thm_LY_bdy} are likely to have
applications similar to those of Theorems~\ref{thm_sp-only-global}
and~\ref{thm_sp-tm-global}. We hope that the material in
Section~\ref{sec_mf_w_bdy} will help shed light on the behavior of
the Ricci flow on manifolds with boundary. Last but not least, our
results may serve as a model for the investigation of problems
similar
to~\eqref{intro_g}--\eqref{intro_u}--\eqref{intro_II}--\eqref{intro_nu}.
For example, is it natural to look at the Ricci flow subject
to~\eqref{intro_II} combined with the conjugate heat equation. As we
previously explained, such problems were actively studied on
manifolds without boundary, but the case where $\partial M$ is
nonempty remains unexplored.

\begin{note} After this paper was completed, we became aware that space-time gradient estimates
for~\eqref{intro_g}--\eqref{intro_u} were researched independently
by Shiping Liu in \emph{Gradient estimates for solutions of the heat
equation under Ricci Flow}, Pacific Journal of Mathematics~243
(2009)~165--180, and Jun Sun in \emph{Gradient estimates for
positive solutions of the heat equation under geometric flow},
preprint. The results of those works are not identical to ours.
\end{note}

\section{Manifolds without boundary}\label{sec_no_bdy}

Our goal is to investigate the Ricci flow combined with the heat
equation. The present section establishes space-only and space-time
gradient estimates in this context.

\subsection{The setup}

Suppose $M$ is a connected, oriented, smooth, $n$-dimensional
manifold without boundary. Some of the results in this section, but
not all of them, concern the case where $M$ is compact. Given $T>0$,
assume $\big(M,g(x,t)\big)_{t\in[0,T]}$ is a complete solution to
the Ricci flow
\begin{align}\label{Riccifloweqn}
\frac\partial{\partial t} g(x,t)=-2\Ric(x,t),\qquad x\in
M,~t\in[0,T].
\end{align}
Suppose a smooth positive function $u:M\times[0,T]\to\mathbb R$
satisfies the heat equation
\begin{align}\label{heateqn}
\left(\Delta-\frac\partial{\partial t}\right)u(x,t)=0,\qquad x\in
M,~t\in[0,T].
\end{align} Here, $\Delta$ stands for the
Laplacian given by $g(x,t)$. In what follows, we will use the
notation $\nabla$ and $|\cdot|$ for the gradient and the norm with
respect to $g(x,t)$. It is clear that $\Delta$, $\nabla$, and
$|\cdot|$ all depend on~$t\in[0,T]$. We will write $XY$ for the
scalar product of the vectors $X$ and $Y$ with respect to $g(x,t)$.

Subsection~\ref{sec_sp-only} offers space-only gradient estimates
for $u(x,t)$. These results require that $u(x,t)$ be a bounded
function. A local space-only gradient estimate for solutions
of~\eqref{heateqn} was originally proved in the paper~\cite{PSQZ06}
in the situation where $g(x,t)$ did not depend on $t\in[0,T]$
and~\eqref{Riccifloweqn} was not in the picture. It was further
generalized in~\cite{QZ06} to hold in the case of the backward Ricci
flow combined with the heat equation. Our
Theorem~\ref{thm_sp-only-local} constitutes a version of this result
for $u(x,t)$. A global space-only gradient estimate for solutions
of~\eqref{heateqn} was originally established in~\cite{RH93} with
$g(x,t)$ independent of $t\in[0,T]$ and~\eqref{Riccifloweqn} not
assumed. It is now known to hold in the cases of both the backward
Ricci flow and the Ricci flow combined with the heat equation;
see~\cite{QZ06,XC08,XCRH09}. We restate it in
Theorem~\ref{thm_sp-only-global} for the completeness of our
exposition. Subsection~\ref{sec_sp-time} contains Li-Yau-type
estimates for~\eqref{Riccifloweqn}--\eqref{heateqn}. As
applications, we obtain two Harnack inequalities.

The results in this section prevail, with obvious modifications, if
the function $u(x,t)$ is defined on $M\times(0,T]$ instead of
$M\times[0,T]$. In order to see this, it suffices to replace
$u(x,t)$ and $g(x,t)$ with $u(x,t+\epsilon)$ and $g(x,t+\epsilon)$
for a sufficiently small $\epsilon>0$, apply the corresponding
formula, and then let $\epsilon$ go to~0. We thus justify, for
example, the application of the theorems in
Subsection~\ref{sec_sp-time} to heat-kernel-type functions.

Two more pieces of notation should be introduced at this point. Let
us fix $x_0\in M$ and $\rho>0$. We write $\dist(\chi,x_0,t)$ for the
distance between $\chi\in M$ and $x_0$ with respect to the metric
$g(x,t)$. The notation $B_{\rho,T}$ stands for the set
$\left\{(\chi,t)\in M\times[0,T]\,|\dist(\chi,x_0,t)<\rho\right\}$.
We point out that Theorems~\ref{thm_sp-only-local}
and~\ref{thm_sp-tm-local} still hold if $u(x,t)$ is defined on
$B_{\rho,T}$ instead of $M\times[0,T]$ and satisfies the heat
equation in $B_{\rho,T}$.

The proofs in this section will often involve local computations.
Therefore, we assume a coordinate system $\{x_1,\ldots,x_n\}$ is
fixed in a neighborhood of every point $x\in M$. The notation
$R_{ij}$ refers to the corresponding components of the Ricci tensor.
In order to facilitate the computations, we often implicitly assume
that $\{x_1,\ldots,x_n\}$ are normal coordinates at $x\in M$ with
respect to the appropriate metric. We use the standard shorthand:
Given a real-valued function $f$ on the manifold $M$, the notation
$f_i$ stands for $\frac{\partial f}{\partial x_i}\,$, the notation
$f_{ij}$ refers to the Hessian of $f$ applied to
$\frac\partial{\partial x_i}$ and $\frac\partial{\partial x_j}$\,,
and $f_{ijk}$ is the third covariant derivative applied to
$\frac\partial{\partial x_i}$\,, $\frac\partial{\partial x_j}$\,,
and $\frac\partial{\partial x_k}$\,. The subscript~$t$ designates
the differentiation in $t\in[0,T]$.

The proofs of Theorems~\ref{thm_sp-only-local}
and~\ref{thm_sp-tm-local} will involve a cut-off function on
$B_{\rho,T}$. The construction of this function will rely on the
basic analytical result stated in the following lemma. This result
is well-known. For example, it was previously used in the proofs of
Theorems~2.3 and~3.1 in~\cite{QZ06}; see
also~\cite[Chapter~IV]{RSSTY94} and~\cite{PSQZ06}.

\begin{lemma}\label{lem_cutoff}
Given $\tau\in(0,T]$, there exists a smooth function
$\bar\Psi:[0,\infty)\times[0, T]\to\mathbb R$ satisfying the
following requirements:
\begin{enumerate}
\item
The support of $\bar\Psi(r,t)$ is a subset of $[0,\rho]\times[0,T]$,
and $0\leq\bar\Psi(r,t)\leq 1$ in $[0,\rho]\times[0,T]$.
\item
The equalities $\bar\Psi(r,t)=1$ and
$\frac{\partial\bar\Psi}{\partial r}(r,t)=0$ hold in
$\left[0,\frac{\rho}2\right]\times\left[\tau,T\right]$ and
$\left[0,\frac{\rho}2\right]\times\left[0,T\right]$, respectively.
\item
The estimate $\left|\frac{\partial\bar\Psi}{\partial
t}\right|\leq\frac{\bar C\bar\Psi^{\frac12}}{\tau}$ is satisfied on
$[0,\infty)\times[0,T]$ for some $\bar C>0$, and $\bar\Psi(r,0)=0$
for all $r\in[0,\infty)$.
\item
The inequalities
$-\frac{C_a\bar\Psi^a}\rho\leq\frac{\partial\bar\Psi}{\partial
r}\leq 0$ and $\left|\frac{\partial^2\bar\Psi}{\partial
r^2}\right|\leq\frac{C_a\bar\Psi^a}{\rho^2}$ hold on
$[0,\infty)\times[0,T]$ for every $a\in(0,1)$ with some constant
$C_a$ dependent on $a$.
\end{enumerate}
\end{lemma}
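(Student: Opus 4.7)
The natural approach is to separate variables and look for $\bar\Psi(r,t)=\eta(r)\phi(t)$ with $\eta\in C^\infty([0,\infty);[0,1])$ a spatial cutoff and $\phi\in C^\infty([0,T];[0,1])$ a temporal cutoff, then verify items (1)--(4) by elementary manipulations.

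To arrange (3), I would pick $\phi$ with $\phi(0)=0$, $\phi\equiv 1$ on $[\tau,T]$, and $|\phi'|\leq\bar C\phi^{1/2}/\tau$. The square-root bound is the only nontrivial requirement; the clean way to produce it is to fix a standard smooth step $\theta\in C^\infty(\mathbb{R};[0,1])$ equal to $0$ on $(-\infty,0]$ and $1$ on $[1,\infty)$, and set $\phi(t)=\theta(t/\tau)^2$. Then $\phi^{1/2}=\theta(t/\tau)$ and $|\phi'(t)|=(2/\tau)|\theta(t/\tau)\theta'(t/\tau)|\leq (2\|\theta'\|_\infty/\tau)\,\phi(t)^{1/2}$.

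To arrange (4), I would take $\eta(r)=\psi_0\!\left(\frac{2r-\rho}{\rho}\right)$ with $\psi_0\in C^\infty(\mathbb{R};[0,1])$ a fixed monotone non-increasing profile equal to $1$ on $(-\infty,0]$ and to $0$ on $[1,\infty)$, enjoying, for every $a\in(0,1)$, the bounds $|\psi_0'|\leq \tilde C_a\psi_0^a$ and $|\psi_0''|\leq \tilde C_a\psi_0^a$. A concrete choice uses the standard flat function $f(s)=\exp(-1/s)$ for $s>0$, extended by $0$ for $s\leq 0$, via $\psi_0(s)=f(1-s)/(f(s)+f(1-s))$. This $\psi_0$ is smooth and vanishes to infinite order at $s=1$; near $s=1$ both $|\psi_0'|$ and $|\psi_0''|$ are of order $\exp(-1/(1-s))$ times a polynomial in $1/(1-s)$, while $\psi_0^a$ is of order $\exp(-a/(1-s))$, so the ratios stay bounded (in fact tend to $0$) as $s\uparrow 1$ for every $a<1$. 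Scaling by $2/\rho$ produces the $\rho$-dependent bounds in (4).

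With these two building blocks in hand, the remaining verifications for $\bar\Psi=\eta\phi$ are mechanical. Item (1) follows from the supports and the inclusions $\eta,\phi\in[0,1]$. Item (2) follows because $\eta\equiv 1$ on $[0,\rho/2]$ makes $\eta'\equiv 0$ there, so $\bar\Psi=\phi$ on that strip and $\partial_r\bar\Psi=0$ on $[0,\rho/2]\times[0,T]$. For (3), $|\partial_t\bar\Psi|=\eta|\phi'|\leq (\bar C/\tau)\,\eta\phi^{1/2}\leq (\bar C/\tau)\,\bar\Psi^{1/2}$, using $\eta\leq 1$ to pass from $\eta\phi^{1/2}$ to $(\eta\phi)^{1/2}$. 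For (4), $\partial_r\bar\Psi=\eta'\phi\leq 0$ and $|\partial_r\bar\Psi|\leq (C_a/\rho)\,\eta^a\phi\leq (C_a/\rho)(\eta\phi)^a=(C_a/\rho)\bar\Psi^a$, since $\phi\leq 1$ and $a<1$; the bound on $|\partial_r^2\bar\Psi|$ is identical. The only genuine obstacle is the construction of $\psi_0$: the $\psi_0^a$-bounds force $\psi_0$ to vanish faster than any polynomial at $s=1$, which is precisely why a flat, exponentially vanishing profile is needed rather than a polynomial one.
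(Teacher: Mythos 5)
The paper does not actually prove Lemma~\ref{lem_cutoff}; it simply declares it well-known and points to the references (Zhang, Souplet--Zhang, Schoen--Yau), so there is no in-paper argument to compare your proposal against. Your construction supplies what the paper omits, and it is correct. The decomposition $\bar\Psi=\eta(r)\phi(t)$ with $\phi=\theta(t/\tau)^2$ is the standard device for producing the $\bar\Psi^{1/2}$ bound on $\partial_t\bar\Psi$, and the flat profile $\psi_0(s)=f(1-s)/(f(s)+f(1-s))$ with $f(s)=e^{-1/s}$ gives a monotone decreasing step with $|\psi_0^{(j)}|\leq\tilde C_a\psi_0^a$ for $j=1,2$ and every $a\in(0,1)$; the ratio $|\psi_0^{(j)}|/\psi_0^a$ behaves like a polynomial in $1/(1-s)$ times $e^{-(1-a)/(1-s)}$ as $s\uparrow 1$, hence is bounded. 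The chain-rule scaling correctly produces the $1/\rho$ and $1/\rho^2$ factors. The recombination steps $\eta\,\phi^{1/2}\leq(\eta\phi)^{1/2}$ (using $\eta\leq 1$) and $\eta^a\phi\leq(\eta\phi)^a$ (using $\phi\leq 1$ and $a<1$) are valid. Your closing observation is also on point: the requirement that the $\psi_0^a$-bound hold for \emph{every} $a<1$ rules out any profile vanishing only to finite order, which is why a flat cutoff is essential rather than merely convenient for smoothness. In short, the proposal is a complete, self-contained proof of a statement the paper delegates to the literature.
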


\subsection{Space-only gradient estimates}\label{sec_sp-only}

Let us begin by stating the local space-only gradient estimate.

\begin{theorem}\label{thm_sp-only-local}
Suppose $\big(M,g(x,t)\big)_{t\in[0,T]}$ is a complete solution to
the Ricci flow~\eqref{Riccifloweqn}. Assume that $|\Ric(x,t)|\leq k$
for some $k>0$ and all $(x,t)\in B_{\rho,T}$. Suppose
$u:M\times[0,T]\to\mathbb R$ is a smooth positive function solving
the heat equation~\eqref{heateqn}. If $u(x,t)\le A$ for some $A>0$
and all $(x,t)\in B_{\rho,T}$, then there exists a constant $C$ that
depends only on the dimension of $M$ and satisfies
\begin{align}\label{sp-only-local-estimate}
\frac{|\nabla u|}{u}\leq
C\left(\frac{1}{\rho}+\frac{1}{\sqrt{t}}+\sqrt{k}\,\right)\left(1+\log\frac{A}{u}\right)
\end{align}
for all $(x,t)\in B_{\frac\rho2,T}$ with $t\ne0$.
\end{theorem}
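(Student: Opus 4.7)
The plan is to mimic the Souplet--Zhang strategy for a local space-only gradient bound, adapted to the fact that both the metric and the Laplacian now evolve under~\eqref{Riccifloweqn}. Set $f=\log(u/A)$, so that $f\le 0$ on $B_{\rho,T}$, and introduce the auxiliary function
\begin{align*}
w=\frac{|\nabla f|^2}{(1-f)^2}.
\end{align*}
Since $|\nabla u|/u=|\nabla f|$ and $1-f=1+\log(A/u)$, an estimate of the form $w\le C(1/\rho^2+1/t+k)$ at a point $(x,t)\in B_{\rho/2,T}$ with $t>0$ will immediately yield~\eqref{sp-only-local-estimate}. The first step is to compute the evolution of $w$ under the operator $\Delta-\partial_t$. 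Using the heat equation to rewrite $f_t=\Delta f+|\nabla f|^2$, and applying the Bochner formula, one finds that the Ricci terms produced by Bochner exactly cancel the Ricci contribution coming from $\partial_t g=-2\Ric$ in $\partial_t|\nabla f|^2$. Expanding the quotient, one obtains an identity of the form
\begin{align*}
(\Delta-\tfrac\partial{\partial t})w=\frac{2}{(1-f)^2}|\nabla^2 f|^2+\frac{2}{1-f}\,\nabla f\cdot\nabla w-\frac{2(1-3f)}{1-f}\,\frac{|\nabla f|^4}{(1-f)^4}+(\text{cross terms}),
\end{align*}
whose essential feature is that the quadratic-in-$w$ term has a definite sign. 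This differential inequality is the analytic engine of the proof.

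Next I would implement the cut-off argument. Pick $\tau\in(0,T]$ and let $\Psi(x,t)=\bar\Psi(\dist(x,x_0,t),t)$ with $\bar\Psi$ from Lemma~\ref{lem_cutoff}. Since $\Psi\equiv0$ at $t=0$ and $\Psi$ is compactly supported in $\overline{B_{\rho,T}}$, the function $\Psi w$ attains its maximum over $M\times[0,\tau]$ at some interior point $(x_1,t_1)$ with $t_1>0$. At this point
\begin{align*}
\nabla(\Psi w)=0,\qquad (\Delta-\tfrac\partial{\partial t})(\Psi w)\le 0.
\end{align*}
Expanding and substituting the evolution of $w$ gives an inequality in which $\Psi w$ at $(x_1,t_1)$ is controlled by $(\Delta\Psi)$, $|\nabla\Psi|^2/\Psi$, and $\Psi_t$. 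Each of these can be estimated thanks to Lemma~\ref{lem_cutoff}: the $r$-derivative bounds translate into $1/\rho$ and $1/\rho^2$ factors, the $t$-derivative gives the $1/\tau$ factor, and the Laplacian of the distance function under $|\Ric|\le k$ yields an additional $\sqrt{k}/\rho$ contribution via the Laplacian comparison theorem.

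Two technical points require attention. First, $\dist(x,x_0,t)$ is not smooth at the cut locus and at $x_0$; I would handle this in the standard way using Calabi's trick, i.e.\ minorizing the distance by a smooth function agreeing with it along a minimizing geodesic and such that the distance is realized by the minimizer. Second, the $t$-derivative of the distance function under~\eqref{Riccifloweqn} produces, along a minimizing geodesic, an integral of Ricci curvature along the geodesic, hence an estimate $|\partial_t\dist|\le Ck\rho$ on the support of $\Psi$; this contribution is absorbed into the $\sqrt{k}$ term by a Cauchy--Schwarz split. The \emph{main obstacle}, and the one requiring most care, is precisely this combined handling of $\Delta\dist$ and $\partial_t\dist$ under the Ricci flow---one must be sure that the extra terms produced by the moving metric are of the same size as, and can be absorbed into, the good quadratic term $(1-f)^2\,w^2/(1-f)=(1-f)w^2\cdot(1-f)$ arising from $|\nabla^2 f|^2\ge(\Delta f)^2/n$ and the algebraic inequality controlling $(\Delta f)^2$ in terms of $w$ and $f_t$.

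Finally, combining all terms and applying Young's inequality several times at $(x_1,t_1)$ produces a bound
\begin{align*}
\Psi^2 w(x_1,t_1)\le C\Bigl(\frac{1}{\rho^2}+\frac{1}{\tau}+k\Bigr),
\end{align*}
with $C$ depending only on $n$. Evaluating at an arbitrary $(x,\tau)\in B_{\rho/2,\tau}$, where $\Psi=1$, and then renaming $\tau$ as $t$ yields $w\le C(1/\rho^2+1/t+k)$, i.e.\ $|\nabla f|\le C(1/\rho+1/\sqrt{t}+\sqrt{k})(1-f)$, which is exactly~\eqref{sp-only-local-estimate}.
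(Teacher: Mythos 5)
Your proposal follows essentially the same route as the paper's proof: the Souplet--Zhang substitution $f=\log(u/A)$, $w=|\nabla f|^2/(1-f)^2$, the evolution inequality $(\Delta-\partial_t)w\ge\frac{2f}{1-f}\nabla f\nabla w+2(1-f)w^2$ (which is exactly the paper's Lemma~\ref{lem_f_w}, and your Bochner/Ricci cancellation observation is the reason it has the same form as in the static-metric case), the cut-off from Lemma~\ref{lem_cutoff}, Calabi's trick at the cut locus, the Laplacian comparison for $\Delta\Psi$, the bound $|\partial_t\dist|\le k\rho$ obtained by integrating Ricci along a minimizing geodesic, and Young's inequality at the maximum of $\Psi w$. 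Aside from a small typographical slip at the end (the quantity bounded by $C(1/\rho^2+1/\tau+k)$ should be $\Psi w$, obtained from $(\Psi w)^2\le\Psi w^2\le C(1/\rho^4+1/\tau^2+k^2)$, not $\Psi^2 w$), the argument matches the paper.
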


We will now establish a lemma of computational character. It will
play a significant part in the proof of
Theorem~\ref{thm_sp-only-local}.

\begin{lemma}\label{lem_f_w}
Let $\big(M,g(x,t)\big)_{t\in[0,T]}$ be a complete solution to the
Ricci flow~\eqref{Riccifloweqn}. Consider a smooth positive function
$u:M\times[0,T]\to\mathbb R$ satisfying the heat
equation~\eqref{heateqn}. Assume that $u(x,t)\le1$ for all $(x,t)\in
B_{\rho, T}$. Let $f=\log u$ and $w=\frac{|\nabla f|^2}{(1-f)^2}\,$.
Then the inequality
\begin{align*}
\left(\Delta-\frac\partial{\partial t}\right)w\geq
\frac{2f}{1-f}\,\nabla f\nabla w + 2(1-f)w^2
\end{align*}
holds in $B_{\rho, T}$.
\end{lemma}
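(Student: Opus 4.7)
Set $f=\log u$ and $\phi=1-f$, so that $w=|\nabla f|^2/\phi^2$ and, because $u\le 1$, one has $\phi\ge 1$. From~\eqref{heateqn} and the chain rule applied to $u=e^f$, one immediately derives the preliminary identity $(\Delta-\partial_t)f=-|\nabla f|^2$, which will be invoked twice.

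The crux of the argument is to first establish the derived evolution
\[
(\Delta-\partial_t)|\nabla f|^2 \;=\; 2|\nabla^2 f|^2 - 2\,\nabla f\,\nabla|\nabla f|^2.
\]
Here is where the coupling with the Ricci flow pays off. Bochner's formula contributes a term $+2\Ric(\nabla f,\nabla f)$ on the Laplacian side, while differentiating $g^{ij}f_i f_j$ in time produces an identical term from~\eqref{Riccifloweqn} (since $\partial_t g^{ij}=2R^{ij}$), together with $2\,\nabla f\,\nabla f_t$. The two Ricci terms cancel, and the remaining piece $2\,\nabla f\,\nabla(\Delta f - f_t)$ collapses via the preliminary identity. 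Notably, no sign hypothesis on Ricci curvature is required.

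Next I would apply the product and quotient rules to $w=|\nabla f|^2\phi^{-2}$ to compute $(\Delta-\partial_t)w$, substitute the two identities above, and then use $\phi^2\nabla w=\nabla|\nabla f|^2+2\phi^{-1}|\nabla f|^2\nabla f$ to trade $\nabla|\nabla f|^2$ for $\nabla w$. Collecting coefficients carefully yields
\[
(\Delta-\partial_t)w \;=\; \frac{2|\nabla^2 f|^2}{\phi^2} + \frac{2(1+f)}{1-f}\,\nabla f\,\nabla w - 2f\,w^2.
\]
Subtracting the target right-hand side $\frac{2f}{1-f}\,\nabla f\,\nabla w + 2(1-f)w^2$ from this reduces the lemma to the inequality
\[
\frac{2|\nabla^2 f|^2}{\phi^2}+\frac{2}{\phi}\,\nabla f\,\nabla w - 2w^2 \;\ge\; 0.
\]
Multiplying through by $\phi^4/2$, using the relation between $\nabla w$ and $\nabla|\nabla f|^2$ once more together with $\phi^2 w=|\nabla f|^2$, and noting that $\nabla f\,\nabla|\nabla f|^2=2f_{ij}f_i f_j$ in normal coordinates, this rearranges to
\[
\phi^2|\nabla^2 f|^2 + \phi\,\nabla f\,\nabla|\nabla f|^2 + |\nabla f|^4 \;=\; \sum_{i,j}\bigl(\phi\, f_{ij}+f_i f_j\bigr)^2 \;\ge\; 0,
\]
a manifest sum of squares.

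The main obstacle is bookkeeping: organizing the terms of $(\Delta-\partial_t)w$ so that the coefficient of $w^2$ works out to precisely $-2f$, because only with this exact constant does the residual assemble as the perfect square above. Once the Ricci cancellation in the second step is in hand, everything else is routine algebra.
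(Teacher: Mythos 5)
Your argument is correct and is essentially the paper's: both compute $\left(\Delta-\frac{\partial}{\partial t}\right)w$, trade $\nabla|\nabla f|^2$ for $\nabla w$, and reduce the claim to the nonnegativity of the identical perfect square $\sum_{i,j}\bigl((1-f)f_{ij}+f_if_j\bigr)^2$, which the paper writes as $2\sum_{i,j}\bigl(\tfrac{f_{ij}}{1-f}+\tfrac{f_if_j}{(1-f)^2}\bigr)^2$. Your write-up is merely more explicit than the paper's (which defers to ``a direct computation'' and the references), in particular in isolating the cancellation of $2\Ric(\nabla f,\nabla f)$ between Bochner's formula and $\partial_t g^{ij}=2R^{ij}$, which is precisely why no curvature hypothesis is needed.
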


\begin{proof}
A direct computation demonstrates that
\begin{align*}
\left(\Delta-\frac\partial{\partial
t}\right)w&=\sum_{i,j=1}^n\left(\frac{2f_{ij}^2}{(1-f)^2}
+8\frac{f_if_{ij}f_j}{(1-f)^3}-4\frac{f_if_jf_{ij}}{(1-f)^2}\right)
\\ &\hphantom{=}~+6\frac{|\nabla f|^4}{(1-f)^4}-2\frac{|\nabla f|^4}{(1-f)^3}
\end{align*}
and
\begin{align*}
4\sum_{i,j=1}^n\frac{f_if_{ij}f_j}{(1-f)^3}&=2\frac{\nabla f\nabla
w}{(1-f)}-4\frac{|\nabla f|^4}{(1-f)^4}
\end{align*}
at every point $(x,t)\in B_{\rho, T}$; cf.~\cite{PSQZ06,QZ06}. Using
these formulas, we conclude that
\begin{align*}
\left(\Delta-\frac\partial{\partial t}\right)w
&=\sum_{i,j=1}^n\left(\frac{2f_{ij}^2}{(1-f)^2}+4\frac{f_if_{ij}f_j}{(1-f)^3}
-4\frac{f_if_jf_{ij}}{(1-f)^2}\right)
\\ &\hphantom{=}~+2\frac{|\nabla
f|^4}{(1-f)^4} +2\frac{\nabla f\nabla w}{(1-f)}-2\frac{|\nabla f|^4}{(1-f)^3} \\
&=2\sum_{i,j=1}^n\left(
\frac{f_{ij}}{1-f}+\frac{f_if_j}{(1-f)^2}\right)^2
\\ &\hphantom{=}~+2\frac{\nabla f\nabla w}{(1-f)} + 2\frac{|\nabla
f|^4}{(1-f)^3}-2\nabla f\nabla w
\\ &\geq\frac{2f}{1-f}\nabla f\nabla w + 2(1-f)w^2
\end{align*}
at $(x,t)\in B_{\rho, T}$.
\end{proof}

The preparations required to prove Theorem~\ref{thm_sp-only-local}
are now completed. Note that we will also make use of arguments from
the paper~\cite{QZ06}.

\begin{proof}[Proof of Theorem~\ref{thm_sp-only-local}]
Without loss of generality, we can assume $A=1$. If this is not the
case, one should just carry out the proof replacing $u(x,t)$ with
$\frac{u(x,t)}A$. Let us pick a number $\tau\in (0,T]$ and fix a
function $\bar\Psi(r,t)$ satisfying the conditions of
Lemma~\ref{lem_cutoff}. We will
establish~\eqref{sp-only-local-estimate} at $(x,\tau)$ for all $x$
such that $\dist(x,x_0,\tau)<\frac\rho2$. Because $\tau$ is chosen
arbitrarily, the assertion of the theorem will immediately follow.

Define $\Psi:M\times[0,T]\to \mathbb R$ by the formula
\begin{align*}
\Psi(x,t)=\bar\Psi(\dist(x,x_0,t),t).
\end{align*}
It is easy to see that $\Psi(x,t)$ is supported in the closure of
$B_{\rho,T}$. This function is smooth at $(x',t')\in M\times[0,T]$
whenever $x'\ne x_0$ and $x'$ is not in the cut locus of $x_0$ with
respect to the metric $g(x,t')$. We will employ the notation $f=\log
u$ and $w=\frac{|\nabla f|^2}{(1-f)^2}$ introduced in
Lemma~\ref{lem_f_w}. It will also be convenient for us to write
$\beta$ instead of $-\frac{2f}{1-f}\nabla f$. Our strategy is to
estimate $\left(\Delta -\frac{\partial}{\partial t}\right)(\Psi w)$
and scrutinize the produced formula at a point where $\Psi w$
attains its maximum. The desired result will then follow.

We use Lemma~\ref{lem_f_w} to conclude that
\begin{align*}
\left(\Delta -\frac{\partial}{\partial t}\right)(\Psi w) \geq \Psi
\left(-\beta\nabla w + 2(1-f)w^2\right) + (\Delta \Psi)w + 2\nabla
\Psi\nabla w-\Psi_t w
\end{align*}
in the portion of $B_{\rho, T}$ where $\Psi(x,t)$ is smooth. This
implies
\begin{align}\label{aux1_sp-only}
\left(\Delta -\frac{\partial}{\partial t}\right)(\Psi w) \geq
&-\beta\nabla(\Psi w) + \frac{2}{\Psi}\,\nabla \Psi\nabla(\Psi w) +
2\Psi(1-f)w^2 \notag \\ &+ w\beta\nabla\Psi -2\frac{|\nabla
\Psi|^2}{\Psi}\,w + (\Delta \Psi)w-\Psi_t w.
\end{align}
The latter inequality holds in the part of $B_{\rho, T}$ where
$\Psi(x,t)$ is smooth and nonzero. Now let $(x_1,t_1)$ be a maximum
point for $\Psi w$ in the closure of $B_{\rho,T}$. If $(\Psi
w)(x_1,t_1)$ is equal to~0, then $(\Psi w)(x,\tau)=w(x,\tau)=0$ for
all $x\in M$ such that $\dist(x,x_0,\tau)<\frac\rho2\,$. This yields
$\nabla u(x,\tau)=0$, and the
estimate~\eqref{sp-only-local-estimate} becomes obvious at
$(x,\tau)$. Thus, it suffices to consider the case where $(\Psi
w)(x_1,t_1)>0$. In particular, $(x_1,t_1)$ must be in $B_{\rho,T}$,
and $t_1$ must be strictly positive.

A standard argument due to E.~Calabi (see, for example,~\cite[page
21]{RSSTY94}) enables us to assume that $\Psi(x,t)$ is smooth at
$(x_1,t_1)$. Because $(x_1,t_1)$ is a maximum point, the formulas
$\Delta(\Psi w)(x_1,t_1)\leq 0$, $\nabla(\Psi w)(x_1,t_1)= 0$, and
$(\Psi w)_t(x_1,t_1)\ge0$ hold true. Together
with~\eqref{aux1_sp-only}, they yield
\begin{align}\label{aux2_sp-only}
2\Psi(1-f)w^2\le -w\beta\nabla\Psi +2\frac{|\nabla \Psi|^2}{\Psi}\,w
- (\Delta \Psi)w + \Psi_t w
\end{align}
at $(x_1,t_1)$. We will now estimate every term in the right-hand
side. This will lead us to the desired result.

A series of computations imply that
\begin{align*}
|w\beta\nabla\Psi| &\leq \Psi(1-f)w^2+\frac{c_1f^4}{\rho^4(1-f)^3}\,,\\
\frac{|\nabla\Psi|^2}{\Psi}\,w &\leq\frac{1}{8}\,\Psi w^2+\frac{c_1}{\rho^4}\,,\\
-(\Delta\Psi)w&\leq \frac18\Psi w^2+\frac{c_1}{\rho^4}+c_1k^2
\end{align*}
at $(x_1,t_1)$ for some constant $c_1>0$; see~\cite{PSQZ06,QZ06}.
Here, we have used the inequality for the weighted arithmetic mean
and the weighted geometric mean, as well as the properties of the
function $\bar\Psi(r,t)$ given by Lemma~\ref{lem_cutoff}. Our next
mission is to find a suitable bound for $(\Psi_tw)(x_1,t_1)$.

It is clear that
\begin{align}\label{tm-der-aux1}
(\Psi_t w)(x_1,t_1)&=\frac{\partial \bar\Psi}{\partial
t}(\dist(x_1,x_0,t_1),t_1)w(x_1,t_1) \notag \\ &\hphantom{=}~+
\frac{\partial \bar\Psi}{\partial
r}(\dist(x_1,x_0,t_1),t_1)\left(\frac{\partial}{\partial
t}\dist(x_1,x_0,t_1)\right)w(x_1,t_1).
\end{align}
We also observe that
\begin{align*}
\left|\frac{\partial\bar\Psi}{\partial
t}(\dist(x_1,x_0,t_1),t_1)\right|w(x_1,t_1) \leq
\frac{1}{16}\left(\Psi w^2\right)(x_1,t_1)+\frac{c_2}{\tau^2}
\end{align*}
for a positive constant $c_2$. Because the function~$\bar\Psi(r,t)$
satisfies the conditions listed in Lemma~\ref{lem_cutoff}, the
inequality
\begin{align}\label{tm-der-aux3}
\left|\frac{\partial \bar\Psi}{\partial
r}(\dist(x_1,x_0,t_1),t_1)\right|\leq
\frac{C_{\frac12}}{\rho}\,\Psi^{\frac12}(x_1,t_1)
\end{align}
holds with $C_{\frac12}>0$. It remains to estimate the derivative of
the distance. Utilizing the assumptions of the theorem, we conclude
that
\begin{align}\label{tm-der-aux2}
\left|\frac{\partial}{\partial
t}\dist(x_1,x_0,t_1)\right|&\le\sup\int\limits_{0}^{\dist(x_1,x_0,t_1)}\left|\Ric\left(\frac
d{ds}\zeta(s),\frac d{ds}\zeta(s)\right)\right|ds \notag \\ &\le
k\dist(x_1,x_0,t_1)\leq k\rho.
\end{align}
In this particular formula, $\Ric$ designates the Ricci curvature of
$g(x,t_1)$. The supremum is taken over all the minimal geodesics
$\zeta(s)$, with respect to $g(x,t_1)$, that connect~$x_0$ to~$x_1$
and are parametrized by arclength; see, e.g.,~\cite[Proof of
Lemma~8.28]{BCPLLN06}. It now becomes clear that
\begin{align*}
\Psi_t w\le \frac1{16}\Psi
w^2+\frac{c_2}{\tau^2}\,+C_{\frac12}kw\Psi^{\frac12}\le \frac18\Psi
w^2+\frac{c_2}{\tau^2}\,+c_3k^2
\end{align*}
at $(x_1,t_1)$ for some $c_3>0$. We have thus found estimates for
every term in the right-hand side of~\eqref{aux2_sp-only}. We will
combine them all, and the assertion of the theorem will shortly
follow.

Given the preceding considerations, formula~\eqref{aux2_sp-only}
implies
\begin{align*}
\Psi(1-f)w^2\le\frac{c_4f^4}{\rho^4(1-f)^3}\, +\frac12\,\Psi
w^2+\frac{c_4}{\rho^4}\,+ \frac{c_4}{\tau^2}+c_4k^2
\end{align*}
at the point $(x_1,t_1)$. The constant $c_4$ here equals
$\max\{3c_1,c_2,c_1+c_3\}$. Since $f(x,t)\le0$ and
$\frac{f^4}{(1-f)^4}\le1$, we can conclude that
\begin{align*}\Psi w^2 &\leq \frac{c_4f^4}{\rho^4(1-f)^4}+\frac12\Psi
w^2+\frac{c_4}{\rho^4}+\frac{c_4}{\tau^2}+c_4k^2, \\
\Psi^2w^2&\le\Psi w^2\leq
\frac{4c_4}{\rho^4}+\frac{2c_4}{\tau^2}+2c_4k^2\end{align*} at
$(x_1,t_1)$. Because $\Psi(x,\tau)=1$ when
$\dist(x,x_0,\tau)<\frac\rho2$, the estimate
\begin{align*}w(x,\tau)=(\Psi w)(x,\tau)\le(\Psi w)(x_1,t_1)\le\frac{C^2}{\rho^2}+\frac{C^2}{\tau}+C^2k\end{align*}
holds with $C=\sqrt{2\sqrt{c_4}}$ for all $x\in M$ such that
$\dist(x,x_0,\tau)<\frac\rho2$. Recalling the definition of $w(x,t)$
and the fact that $\tau\in(0,T]$ was chosen arbitrarily, we obtain
the inequality
\begin{align*}
\frac{|\nabla f(x,t)|}{1-f(x,t)}\leq
C\left(\frac{1}{\rho}+\frac{1}{\sqrt{t}}+\sqrt{k}\,\right)
\end{align*}
for $(x,t)\in B_{\frac\rho2,T}$ provided $t\ne0$. The assertion of
the theorem follows by an elementary computation.
\end{proof}

Our next step is to assume $M$ is compact and state a global
gradient estimate for the function $u(x,t)$. This result was
previously established in~\cite{QZ06,XCRH09}. We restate it here for
the completeness of our exposition. Moreover, we believe it is
appropriate to present the proof, which is quite short. A
computation from this proof will be used in
Section~\ref{sec_mf_w_bdy}.

\begin{theorem}[Q. Zhang~\cite{QZ06}, X. Cao and R. Hamilton~\cite{XCRH09}]\label{thm_sp-only-global}
Suppose the manifold $M$ is compact, and let
$\big(M,g(x,t)\big)_{t\in[0,T]}$ be a solution to the Ricci
flow~\eqref{Riccifloweqn}. Assume a smooth positive function
$u:M\times[0,T]\to\mathbb R$ satisfies the heat
equation~\eqref{heateqn}. Then the estimate
\begin{align}\label{sp-on-glob}
\frac{|\nabla u|}u\le\sqrt{\frac1t\log\frac A{u}\,},\qquad x\in
M,~t\in(0,T],
\end{align} holds with $A=\sup_Mu(x,0)$.
\end{theorem}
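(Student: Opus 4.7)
The plan is to reduce to the case $A=1$ by rescaling $u\mapsto u/A$ (permissible because \eqref{heateqn} is linear); combined with the parabolic maximum principle on the compact cylinder $M\times[0,T]$, this makes $f:=\log u$ non-positive everywhere. The inequality \eqref{sp-on-glob} then reads $t|\nabla f|^2+f\le 0$, so I would introduce
$$
P(x,t):=t\,|\nabla f|^2+f
$$
and attempt to prove $P\le 0$ on $M\times[0,T]$ by a maximum-principle contradiction.

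The key computational ingredient is the evolution of $|\nabla f|^2$ under the coupled system. The equation $\Delta u=u_t$ rewrites as $f_t=\Delta f+|\nabla f|^2$, so $(\Delta-\partial_t)f=-|\nabla f|^2$. For the gradient term I would combine the Bochner identity
$$
\tfrac12\Delta|\nabla f|^2=|\nabla^2 f|^2+\nabla f\nabla\Delta f+\Ric(\nabla f,\nabla f)
$$
with the metric evolution $\partial_t g^{ij}=2R^{ij}$, which gives $\partial_t|\nabla f|^2=2\Ric(\nabla f,\nabla f)+2\nabla f\nabla f_t$. The two Ricci pieces cancel, leaving the clean identity
$$
(\Delta-\partial_t)|\nabla f|^2=2|\nabla^2 f|^2-2\nabla f\nabla|\nabla f|^2.
$$
This Ricci cancellation is characteristic of the coupling \eqref{Riccifloweqn}--\eqref{heateqn} and is, I suspect, the computation the authors flag as being re-used in Section~\ref{sec_mf_w_bdy}. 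Assembling the pieces,
$$
(\Delta-\partial_t)P=2t|\nabla^2 f|^2-2t\,\nabla f\nabla|\nabla f|^2-2|\nabla f|^2.
$$

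I would then run the parabolic maximum principle on $M\times[0,T]$. Since $P=f\le 0$ at $t=0$, any violation of $P\le 0$ would force $P$ to attain a positive maximum at some point $(x_1,t_1)$ with $t_1>0$. There $\nabla P=0$ reads $t_1\nabla|\nabla f|^2=-\nabla f$, which converts the mixed term into $+2|\nabla f|^2$ and collapses the evolution to $(\Delta-\partial_t)P=2t_1|\nabla^2 f|^2$ at $(x_1,t_1)$. The inequalities $\Delta P\le 0$ and $P_t\ge 0$ at a maximum then force $\nabla^2 f=0$ there; substituting this back into $t_1\nabla|\nabla f|^2=-\nabla f$ gives $\nabla f=0$, hence $P(x_1,t_1)=f(x_1,t_1)\le 0$, contradicting positivity. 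Therefore $P\le 0$ throughout $M\times[0,T]$, and taking square roots yields \eqref{sp-on-glob}.

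The only non-routine point, in my view, is recognizing the Ricci cancellation and then guessing the auxiliary quantity so that $\nabla P=0$ exactly neutralizes the gradient term coming from Bochner; once that pairing is in place the contradiction is essentially automatic, which explains why the authors describe the proof as short.
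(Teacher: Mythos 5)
Your argument is correct, and the idea is the same as the paper's: pick an auxiliary quantity combining $t$ times the gradient term with the logarithmic term, apply the parabolic maximum principle, and exploit the Ricci cancellation that the coupling $\partial_t g = -2\Ric$ provides in the Bochner formula. Your identity $(\Delta-\partial_t)|\nabla f|^2 = 2|\nabla^2 f|^2 - 2\nabla f\nabla|\nabla f|^2$ is exactly this cancellation, and you have correctly identified it as the computation reused in Section~3.

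The one genuine difference is the choice of auxiliary function. The paper works with $P_{\mathrm{paper}} = t\frac{|\nabla u|^2}{u} - u\log\frac{A}{u}$, which is $u$ times your $P = t|\nabla f|^2 + f$ after normalizing $A=1$. That cosmetic factor of $u$ matters for the mechanics of the maximum principle: the paper's $P_{\mathrm{paper}}$ satisfies the clean supersolution inequality $(\Delta-\partial_t)P_{\mathrm{paper}} = \frac{2t}{u}\sum_{i,j}(u_{ij}-\frac{u_iu_j}{u})^2 \ge 0$ with no first-order term, so the conclusion $P_{\mathrm{paper}}\le 0$ follows in one line from the scalar maximum principle and $P_{\mathrm{paper}}(\cdot,0)\le 0$. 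Your $P$ instead satisfies $(\Delta-\partial_t)P = 2t|\nabla^2 f|^2 - 2t\nabla f\nabla|\nabla f|^2 - 2|\nabla f|^2$, which carries a drift term; you handle it by evaluating at a hypothetical positive maximum, using $\nabla P = 0$ to convert the mixed term, deducing $\nabla^2 f = 0$, and then feeding this back into $t_1\nabla|\nabla f|^2 = -\nabla f$ to get $\nabla f = 0$. This chain is valid. A slightly slicker route within your framework would be to observe directly that $(\Delta - \partial_t + 2\nabla f\cdot\nabla)P = 2t|\nabla^2 f|^2 \ge 0$ everywhere, so $P$ is a supersolution for the drift operator and the maximum principle applies immediately without the two-step contradiction. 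Either way, the content is the same as the paper's; the paper's choice of $P_{\mathrm{paper}}$ simply pre-absorbs the drift by multiplying by $u$, which is what makes their proof look shorter.
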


\begin{remark}
The maximum principle implies that~$A$ is actually equal to
\linebreak $\sup_{M\times[0,T]} u(x,t)$. This explains why the
right-hand side of~\eqref{sp-on-glob} is well-defined.
\end{remark}

\begin{proof}
Consider the function $P=t\frac{|\nabla u|^2}{u}-u\log\frac{A}{u}$
on the set $M\times[0,T]$. It is clear that $P(x,0)$ is nonpositive
for every $x\in M$. A computation shows that
\begin{align*}
\left(\Delta-\frac{\partial}{\partial t}\right)P &=
t\left(\Delta-\frac{\partial}{\partial t}\right)\left(\frac{|\nabla
u|^2}{u}\right)
\\ &=2\frac{t}{u}\sum_{i,j=1}^n\left(u_{ij}-\frac{u_iu_j}{u}\right)^2\geq
0,\qquad x\in M,~t\in[0,T].
\end{align*}
In accordance with the maximum principle, this implies $P(x,t)$ is
nonpositive for all $(x,t)\in M\times[0,T]$. The desired assertion
follows immediately.
\end{proof}

\subsection{Space-time gradient estimates}\label{sec_sp-time}

This subsection establishes Li-Yau-type inequalities for
system~\eqref{Riccifloweqn}--\eqref{heateqn}. We will obtain a local
and a global estimate. The following lemma will be important to our
considerations; cf.~Lemma~1 in~\cite[Chapter~IV]{RSSTY94}. It will
also reoccur in Section~\ref{sec_mf_w_bdy}.

\begin{lemma}\label{lem_LY_comp}
Suppose $\big(M,g(x,t)\big)_{t\in[0,T]}$ is a complete solution to
the Ricci flow~\eqref{Riccifloweqn}. Assume that
$-k_1g(x,t)\le\Ric(x,t)\leq k_2g(x,t)$ for some $k_1,k_2>0$ and all
$(x,t)\in B_{\rho,T}$. Suppose $u:M\times[0,T]\to\mathbb R$ is a
smooth positive function satisfying the heat
equation~\eqref{heateqn}. Given $\alpha\ge1$, define $f=\log u$ and
$F=t\left(|\nabla f|^2-\alpha f_t\right)$. The estimate
\begin{align}\label{est_lem_LY}
\left(\Delta-\frac{\partial}{\partial t}\right) F\ge &-2\nabla
f\nabla F \notag \\ &+\frac{2a\alpha t}{n}\left(|\nabla
f|^2-f_t\right)^2-\left(|\nabla f|^2-\alpha f_t\right) \notag \\
&-2k_1\alpha t|\nabla f|^2-\frac{\alpha
tn}{2b}\max\left\{k_1^2,k_2^2\right\}, \qquad (x,t)\in B_{\rho,T},
\end{align} holds for
any $a,b>0$ such that $a+b=\frac{1}{\alpha}$.
\end{lemma}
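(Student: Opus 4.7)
The plan is to compute $LF$, where $L=\Delta-\partial/\partial t$, in such a way that the drift $-2\nabla f\nabla F$ and the summand $-(|\nabla f|^2-\alpha f_t)$ appearing in~\eqref{est_lem_LY} fall out of the product rule, and then to bound the surviving quadratic part using the curvature hypothesis. Writing $F=tH$ with $H=|\nabla f|^2-\alpha f_t$, the identity $L(tH)=tLH-H$ immediately yields the linear summand in~\eqref{est_lem_LY}, so it suffices to analyze $L(|\nabla f|^2)$ and $L(f_t)$ separately.

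For $L(|\nabla f|^2)$, I would combine the Bochner identity with the Ricci-flow derivative $\partial_t|\nabla f|^2=2\Ric(\nabla f,\nabla f)+2\nabla f\nabla f_t$ arising from $\partial_t g^{ij}=2R^{ij}$; the two Ricci contributions cancel, and using $\Delta f-f_t=-|\nabla f|^2$ (a consequence of $(\Delta-\partial_t)u=0$) yields $L(|\nabla f|^2)=2|\nabla^2 f|^2-2\nabla f\nabla|\nabla f|^2$. For $L(f_t)$, the trick is to differentiate the identity $Lf=-|\nabla f|^2$ in $t$, which rearranges to $L(f_t)=-\partial_t|\nabla f|^2+[\Delta,\partial_t]f$. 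Expanding $\partial_t(g^{ij}f_{ij})$ produces a Ricci term from $\partial_t g^{ij}$, the term $\Delta f_t$, and a residue $g^{ij}(\partial_t\Gamma^k_{ij})f_k$ that vanishes by the contracted second Bianchi identity ($\nabla^jR_{jl}=\tfrac12\nabla_lR$). This gives $[\Delta,\partial_t]f=-2\langle\Ric,\nabla^2 f\rangle$, hence $L(f_t)=-2\Ric(\nabla f,\nabla f)-2\nabla f\nabla f_t-2\langle\Ric,\nabla^2 f\rangle$.

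Substituting into $LF$, the gradient terms assemble as $-2t\,\nabla f\nabla|\nabla f|^2+2\alpha t\,\nabla f\nabla f_t=-2\nabla f\nabla F$, leaving the quadratic piece $2t|\nabla^2 f|^2+2\alpha t\Ric(\nabla f,\nabla f)+2\alpha t\langle\Ric,\nabla^2 f\rangle$. I would bound $\Ric(\nabla f,\nabla f)\ge-k_1|\nabla f|^2$ and apply weighted AM-GM to the cross term: $2\alpha t|\langle\Ric,\nabla^2 f\rangle|\le 2\alpha tb|\nabla^2 f|^2+(\alpha t/(2b))|\Ric|^2$. Using $a+b=1/\alpha$, the $|\nabla^2 f|^2$ coefficient collapses to $2t-2\alpha tb=2a\alpha t$; the pointwise bound $|\nabla^2 f|^2\ge(\Delta f)^2/n=(|\nabla f|^2-f_t)^2/n$ and the estimate $|\Ric|^2\le n\max\{k_1^2,k_2^2\}$ (since the eigenvalues of $\Ric$ with respect to $g$ lie in $[-k_1,k_2]$) then produce exactly the two remaining terms in~\eqref{est_lem_LY}.

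The step I expect to be the main obstacle is the commutator computation $[\Delta,\partial_t]f$ under the Ricci flow: one must carefully track the variation of the Christoffel symbols and invoke the contracted Bianchi identity to cancel the divergence-of-Ricci term, so that only the clean expression $-2\langle\Ric,\nabla^2 f\rangle$ survives. Everything else is a single application of weighted AM-GM with the split dictated by the constraint $a+b=1/\alpha$ and the Cauchy--Schwarz bound relating $|\nabla^2 f|^2$ to $(\Delta f)^2$.
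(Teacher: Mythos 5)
Your proposal is correct and uses the same essential ingredients as the paper's proof: the Bochner identity, the Ricci-flow evolution $\partial_t g^{ij}=2R^{ij}$, the cancellation of the Christoffel variation $g^{ij}\partial_t\Gamma^k_{ij}=0$ via the contracted second Bianchi identity (the paper states this as $\Delta(f_t)=(\Delta f)_t-2\sum R_{ij}f_{ij}$ without elaboration), the substitution $\Delta f-f_t=-|\nabla f|^2$, the split $a+b=1/\alpha$ to absorb the Ricci--Hessian cross term, and the bounds $|\nabla^2 f|^2\ge(\Delta f)^2/n$ and $|\Ric|^2\le n\max\{k_1^2,k_2^2\}$. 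The organization differs in a way worth noting: you compute the exact Bochner-type identity
\[
\left(\Delta-\tfrac{\partial}{\partial t}\right)H=-2\nabla f\nabla H+2|\nabla^2 f|^2+2\alpha\langle\Ric,\nabla^2 f\rangle+2\alpha\Ric(\nabla f,\nabla f)
\]
\emph{before} introducing any curvature bounds, so the single Ricci correction and the single cross term appear cleanly and each bound is applied once. The paper instead estimates $\Delta F$ and $\partial_t F$ separately, invokes the lower Ricci bound twice (once for $\Ric(\nabla f,\nabla f)$ inside the Bochner term and again for $(|\nabla f|^2)_t$), and then reassembles the drift $-2\nabla f\nabla F$ across the two stages. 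Your Cauchy--Schwarz-plus-AM-GM bound on $\langle\Ric,\nabla^2 f\rangle$ is equivalent to the paper's entrywise completion of the square $\sum_{i,j}(\sqrt{b}\,f_{ij}+R_{ij}/(2\sqrt{b}))^2$. The net estimate is identical; your version is simply tighter in exposition and makes the structural reason for the $a+b=1/\alpha$ constraint (namely $1-\alpha b=\alpha a$) immediate.
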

\begin{proof}
We begin by finding a convenient bound on $\Delta F$. Observe that
\begin{align*}\Delta F=
t\left(2\sum_{i,j=1}^n\left(f_{ij}^2+2f_jf_{jii}\right)-\alpha\Delta(f_t)\right),\qquad
x\in M,~t\in[0,T].\end{align*} Our assumption on the Ricci curvature
of $M$ implies the inequality
\begin{align*}
\sum_{i,j=1}^nf_jf_{jii} & =\sum_{i,j=1}^n\left(f_jf_{iij}+R_{ij}f_if_j\right) \\
&=\nabla f\nabla(\Delta f)+\Ric(\nabla f,\nabla f)\geq \nabla
f\nabla(\Delta f) -k_1|\nabla f|^2
\end{align*}
at an arbitrary point $(x,t)\in B_{\rho,T}$.
Using~\eqref{Riccifloweqn}, we can show that
\begin{align*}
\Delta(f_t) & =(\Delta f)_t-2\sum_{i,j=1}^nR_{ij}f_{ij}.
\end{align*}
Consequently, the estimate
\begin{align*}\Delta F \geq
t\left(2\sum_{i,j=1}^n\left(f_{ij}^2 +2\alpha
R_{ij}f_{ij}\right)+2\nabla f\nabla(\Delta f)-2k_1|\nabla
f|^2-\alpha(\Delta f)_t\right)
\end{align*}
holds at $(x,t)\in B_{\rho,T}$. Our next step is to find a suitable
bound on those terms in the right-hand side that involve~$f_{ij}$.
We do so by completing the square. More specifically, observe that
\begin{align*}\sum_{i,j=1}^n\left(f_{ij}^2+\alpha
R_{ij}f_{ij}\right)&=\sum_{i,j=1}^n\left((a\alpha+b\alpha)f_{ij}^2+\alpha
R_{ij}f_{ij}\right) \\ &= \sum_{i,j=1}^n\left(a\alpha f_{ij}^2
+\alpha\left(\sqrt{b}\,f_{ij}+\frac{R_{ij}}{2\sqrt{b}}\right)^2-\frac{\alpha}{4b}\,R_{ij}^2\right)
\\ &\ge \sum_{i,j=1}^n\left(a\alpha
f_{ij}^2-\frac{\alpha}{4b}\,R_{ij}^2\right)
\end{align*} at $(x,t)\in B_{\rho,T}$ for any $a,b>0$ such that
$a+b={1\over\alpha}$. Employing the standard inequality
\begin{align*}\sum_{i,j=1}^nf_{ij}^2\geq\frac{(\Delta
f)^2}{n}\end{align*} and the assumptions of the lemma, we obtain the
estimate
\begin{align*} \sum_{i,j=1}^n\left(f_{ij}^2+\alpha R_{ij}f_{ij}\right) \geq \frac{a\alpha}{n}(\Delta f)^2
-\frac{\alpha n}{4b}\max\left\{k_1^2,k_2^2\right\},\qquad (x,t)\in
B_{\rho,T}.\end{align*} It is easy to conclude that
\begin{align}\label{aux1_lem_LY}
\Delta F & \geq t\left( \frac{2a\alpha}{n}(\Delta f)^2 +2\nabla
f\nabla(\Delta f)-2k_1|\nabla f|^2 -\alpha(\Delta f)_t-\frac{\alpha
n}{2b}\,\max\left\{k_1^2,k_2^2\right\}\right)\notag
\\ &=\frac{2a\alpha t}{n}\left(f_t-|\nabla f|^2\right)^2+2t\nabla
f\nabla\left(f_t-|\nabla f|^2\right) \notag \\
&\hphantom{=}~-2k_1t|\nabla f|^2-\alpha t\left(f_t-|\nabla
f|^2\right)_t-\frac{\alpha tn}{2b}\,\max\left\{k_1^2,k_2^2\right\}
\end{align}
in the set $B_{\rho,T}$.

Formula~\eqref{aux1_lem_LY} provides us with a convenient bound on
$\Delta F$. Let us now include the derivative of $F$ in~$t\in[0,T]$
into our considerations. One easily computes
\begin{align*}
\frac{\partial F}{\partial t}=|\nabla f|^2-\alpha f_t
+t\left(|\nabla f|^2-\alpha f_t\right)_t.
\end{align*}
Subtracting this from~\eqref{aux1_lem_LY}, we see that the
inequality
\begin{align*}
\left(\Delta-\frac{\partial}{\partial t}\right) F & \geq \frac{2a\alpha t}{n}\left(f_t-|\nabla f|^2\right)^2
+2t\nabla f\nabla\left(f_t-|\nabla f|^2\right)-2k_1t|\nabla f|^2 \\
&\hphantom{=}~-\frac{\alpha
tn}{2b}\max\left\{k_1^2,k_2^2\right\}-\left(|\nabla f|^2-\alpha
f_t\right) + (\alpha-1)t\left(|\nabla f|^2\right)_t
\end{align*}
holds in the set $B_{\rho,T}$. In order to arrive
to~\eqref{est_lem_LY} from here, we need to estimate $\left(|\nabla
f|^2\right)_t$. The Ricci flow equation~\eqref{Riccifloweqn} and the
assumptions of the lemma imply
\begin{align*}
\left(|\nabla f|^2\right)_t=2\nabla f\nabla (f_t)+2\Ric(\nabla
f,\nabla f)\geq 2\nabla f\nabla (f_t)-2k_1|\nabla f|^2
\end{align*}
at $(x,t)\in B_{\rho,T}$. As a consequence,
\begin{align*}
\left(\Delta-\frac{\partial}{\partial t}\right) F & \geq
\frac{2a\alpha t}{n}\left(f_t-|\nabla f|^2\right)^2-\left(|\nabla
f|^2-\alpha f_t\right)
\\ &\hphantom{=}~-\frac{\alpha tn}{2b}\max\left\{k_1^2,k_2^2\right\}-2t\nabla f\nabla\left(|\nabla
f|^2-\alpha f_t\right)-2k_1\alpha t|\nabla f|^2
\end{align*}
in $B_{\rho,T}$. The desired assertion follows immediately.
\end{proof}

With Lemma~\ref{lem_LY_comp} at hand, we are ready to establish the
local space-time gradient estimate. We will also make use of
arguments from the proof of Theorem~4.2
in~\cite[Chapter~IV]{RSSTY94}. Recall that~$n$ designates the
dimension of~$M$.

\begin{theorem}\label{thm_sp-tm-local}
Let $\big(M,g(x,t)\big)_{t\in[0,T]}$ be a complete solution to the
Ricci flow~\eqref{Riccifloweqn}. Suppose $-k_1g(x,t)\le\Ric(x,t)\leq
k_2g(x,t)$ for some $k_1,k_2>0$ and all $(x,t)\in B_{\rho,T}$.
Consider a smooth positive function $u:M\times[0,T]\to\mathbb R$
solving the heat equation~\eqref{heateqn}. There exists a constant
$C'$ that depends only on the dimension of $M$ and satisfies the
estimate
\begin{align}\label{sp-tm-loc}
\frac{|\nabla u|^2}{u^2}-\alpha\frac{u_t}u\leq
C'\alpha^2\left(\frac{\alpha^2}{\rho^2(\alpha-1)}
+\frac{1}t+\max\left\{k_1,k_2\right\}\right)+\frac{nk_1\alpha^3}{\alpha-1}
\end{align}
for all $\alpha>1$ and all $(x,t)\in B_{\frac\rho2,T}$ with $t\ne0$.
\end{theorem}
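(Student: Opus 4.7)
The plan is to mimic the cutoff/maximum-principle argument from the proof of Theorem~\ref{thm_sp-only-local}, substituting the Li--Yau quantity $F=t(|\nabla f|^2-\alpha f_t)$ for $w$ and using Lemma~\ref{lem_LY_comp} in place of Lemma~\ref{lem_f_w}. I would fix $\tau\in(0,T]$, take $\bar\Psi$ from Lemma~\ref{lem_cutoff}, set $\Psi(x,t)=\bar\Psi(\dist(x,x_0,t),t)$, and let $(x_1,t_1)$ be a maximum point of $\Psi F$ on the closure of $B_{\rho,T}$. If $(\Psi F)(x_1,t_1)\le 0$, then $F(x,\tau)\le 0$ on the half-ball and~\eqref{sp-tm-loc} is immediate at $\tau$. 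Otherwise Calabi's trick lets me treat $\Psi$ as smooth at $(x_1,t_1)$; then $\Psi\nabla F=-F\nabla\Psi$, $\Delta(\Psi F)\le 0$, and $(\Psi F)_t\ge 0$ yield
\begin{align*}
\Psi(\Delta F-F_t)\le -F\Delta\Psi+\frac{2F|\nabla\Psi|^2}{\Psi}+F\Psi_t.
\end{align*}

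Substituting Lemma~\ref{lem_LY_comp}, rewriting $-2\Psi\nabla f\cdot\nabla F$ as $2F\nabla f\cdot\nabla\Psi$, and multiplying by $\Psi$ produce, with $G=\Psi F$ and $K=\max\{k_1,k_2\}$, an inequality of the form
\begin{align*}
\frac{2a\alpha t}{n}\Psi^2(|\nabla f|^2-f_t)^2 \le\,& 2F\nabla f\cdot\nabla\Psi+\frac{G}{t}+2k_1\alpha t\,\Psi^2|\nabla f|^2 \\
& +\frac{\alpha tn}{2b}\Psi^2K^2-F\Psi\Delta\Psi+2F|\nabla\Psi|^2+F\Psi\Psi_t.
\end{align*}
The crucial algebraic step is the identity $|\nabla f|^2-f_t=\frac{\alpha-1}{\alpha}|\nabla f|^2+\frac{F}{\alpha t}$ (both summands nonnegative at the max), which gives
\begin{align*}
(|\nabla f|^2-f_t)^2\ge \frac{F^2}{\alpha^2 t^2}+\frac{2(\alpha-1)|\nabla f|^2F}{\alpha^2 t}+\frac{(\alpha-1)^2|\nabla f|^4}{\alpha^2},
\end{align*}
so the left-hand side carries three distinct nonnegative pieces in $F^2/t^2$, $|\nabla f|^2F$, and $|\nabla f|^4$.

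The heart of the argument is the interaction between the middle piece and the curvature term $2k_1\alpha t\,\Psi^2|\nabla f|^2$ on the right: choosing $a=b=\tfrac{1}{2\alpha}$ (the natural balanced choice), the middle contribution becomes $\tfrac{2(\alpha-1)}{n\alpha^2}\Psi^2|\nabla f|^2F$, which dominates $2k_1\alpha t\,\Psi^2|\nabla f|^2$ precisely when $F\ge \tfrac{nk_1\alpha^3 t}{\alpha-1}$; in the complementary case $F<\tfrac{nk_1\alpha^3 t}{\alpha-1}$ the estimate~\eqref{sp-tm-loc} already holds, and this dichotomy is the origin of the additive $\tfrac{nk_1\alpha^3}{\alpha-1}$ in~\eqref{sp-tm-loc}. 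The remaining summands on the right are handled by Young's inequality and the properties of $\bar\Psi$ from Lemma~\ref{lem_cutoff}, in full parallel with the bounds on $|w\beta\nabla\Psi|$, $|\nabla\Psi|^2w/\Psi$, $-(\Delta\Psi)w$, and $\Psi_tw$ in the proof of Theorem~\ref{thm_sp-only-local}: the cross term $|2F\nabla f\cdot\nabla\Psi|$ is absorbed into the newly available $\Psi^2|\nabla f|^4$ piece, and the distance-evolution contribution to $\Psi_t$ is controlled via the Ricci lower bound exactly as in~\eqref{tm-der-aux2}.

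Collecting everything produces, at $(x_1,t_1)$, a quadratic inequality of the form $\tfrac{a}{n\alpha t}G^2\le C\Psi G(\tfrac{\alpha^2}{\rho^2(\alpha-1)}+\tfrac{1}{t}+K)+\tfrac{nk_1\alpha^3}{\alpha-1}\Psi G$; dividing by $G$, evaluating at $t=\tau$ on the half-ball (where $\Psi=1$), and using the arbitrariness of $\tau\in(0,T]$ gives~\eqref{sp-tm-loc}. I expect the main technical obstacle to be the bookkeeping of the Young-inequality constants: the stray $|\nabla f|^2$ and $|\nabla f|^4$ contributions generated when bounding $|2F\nabla f\cdot\nabla\Psi|$, $F\Psi\Delta\Psi$, and $F\Psi\Psi_t$ must each be absorbed with just the right weights so that the final $\alpha$-dependence matches~\eqref{sp-tm-loc}, in particular producing the $\alpha^3/(\alpha-1)$ factor on the $k_1$ term. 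Beyond that, the argument is a longer but essentially routine adaptation of the proof of Theorem~\ref{thm_sp-only-local}.
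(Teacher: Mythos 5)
Your framework matches the paper's exactly: cutoff $\Psi$, Calabi trick, maximum point of $\Psi F$, Lemma~\ref{lem_LY_comp}, and the algebraic identity $|\nabla f|^2-f_t=\tfrac{\alpha-1}{\alpha}|\nabla f|^2+\tfrac{F}{\alpha t}$ (the paper writes the equivalent $(y-z)^2=\tfrac1{\alpha^2}(y-\alpha z)^2+\tfrac{(\alpha-1)^2}{\alpha^2}y^2+\tfrac{2(\alpha-1)}{\alpha^2}y(y-\alpha z)$ with $y=\Psi|\nabla f|^2$, $z=\Psi f_t$). Your dichotomy on $F$ versus $\tfrac{nk_1\alpha^3 t}{\alpha-1}$ to dispose of the $k_1|\nabla f|^2$ term is a legitimate alternative to what the paper actually does, which is to apply $\kappa_1v^2-\kappa_2v\ge-\kappa_2^2/(4\kappa_1)$ with $v=y$ so that the $|\nabla f|^4$ piece absorbs the curvature term (producing the $-\tfrac{n^2k_1^2\alpha^3}{4a(\alpha-1)^2}$ term, which contributes $\tfrac{nk_1\alpha^3}{\alpha-1}$ after the quadratic formula and $a=\tfrac1{2\alpha}$). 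Both routes give the same $\alpha$-power.

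The genuine gap is your handling of the cutoff cross term $2F\nabla f\cdot\nabla\Psi$. You claim to absorb $\lesssim\tfrac1\rho\Psi^{3/2}F|\nabla f|$ into the $\Psi^2|\nabla f|^4$ piece, but $F|\nabla f|$ and $|\nabla f|^4$ do not pair by a clean Young inequality: matching $|\nabla f|$ against $|\nabla f|^4$ forces exponents $(4,\,4/3)$ and leaves a stray $F^{4/3}$, which does not slot into a quadratic in $\Psi F$. The paper instead pairs the cross term with the \emph{middle} piece $y(y-\alpha z)$: after factoring out $y-\alpha z=\Psi F/t\ge0$ at the maximum, one applies $\kappa_1v^2-\kappa_2v\ge-\kappa_2^2/(4\kappa_1)$ with $v=y^{1/2}$ to get $-\tfrac{nC_{1/2}}{\rho}y^{1/2}(y-\alpha z)+\tfrac{2a(\alpha-1)}{\alpha}y(y-\alpha z)\ge -\tfrac{n^2C_{1/2}^2\alpha}{8a(\alpha-1)\rho^2}(y-\alpha z)$, which stays linear in $\Psi F$. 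Since your scheme commits the middle piece to the curvature dichotomy, you would have a conflict; the fix is to split the middle piece in half (doubling your threshold, which only changes a constant), leaving $|\nabla f|^4$ unused. Finally, your ``collected'' inequality $\tfrac{a}{n\alpha t}G^2\le C\Psi G(\cdots)+\tfrac{nk_1\alpha^3}{\alpha-1}\Psi G$ cannot be correct as written: dividing by $G$, multiplying by $\tfrac{n\alpha t}{a}=2n\alpha^2t$, and then dividing by $t$ turns the last term into $\tfrac{2n^2k_1\alpha^5}{\alpha-1}$, not the $\alpha^3$ of~\eqref{sp-tm-loc}. The additive $\tfrac{nk_1\alpha^3}{\alpha-1}$ has to enter via the small-$F$ branch of the dichotomy, outside the quadratic inequality, not as another linear-in-$G$ term inside it.
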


\begin{proof}
We preserve the notation $f=\log u$ and $F=t\left(|\nabla
f|^2-\alpha f_t\right)$ from Lemma~\ref{lem_LY_comp}. Our strategy
in this proof will be similar to that in the proof of
Theorem~\ref{thm_sp-only-local}. The role of the function $w(x,t)$
now goes to the function~$F(x,t)$.

Let us pick $\tau\in (0,T]$ and fix $\bar\Psi(r,t)$ satisfying the
conditions of Lemma~\ref{lem_cutoff}. Define $\Psi:M\times[0,T]\to
\mathbb R$ by setting
\begin{align*}
\Psi(x,t)=\bar\Psi(\dist(x,x_0,t),t).
\end{align*}
We will establish~\eqref{sp-tm-loc} at $(x,\tau)$ for $x\in M$ such
that $\dist(x,x_0,\tau)<\frac\rho2$. This will complete the proof.
Our plan is to estimate $\left(\frac{\partial}{\partial
t}-\Delta\right)(\Psi F)$ and analyze the result at a point where
the function $\Psi F$ attains its maximum. The required conclusion
will follow therefrom.

Lemma~\ref{lem_LY_comp} and some straightforward computations imply
\begin{align}\label{aux1_sp-time}
\left(\Delta-\frac{\partial}{\partial t}\right)(\Psi F)& \ge
-2\nabla f\nabla(\Psi F)+2F\nabla f\nabla\Psi \notag
\\ &\hphantom{=}~+\left(\frac{2a\alpha t}{n}\left(|\nabla f|^2-f_t\right)^2
-\left(|\nabla f|^2-\alpha f_t\right)\right)\Psi \notag \\ &\hphantom{=}~-\left(2k_1\alpha t|\nabla f|^2+\frac{\alpha tn}{2b}\,\bar k^2\right)\Psi \notag \\
&\hphantom{=}~+(\Delta \Psi)F+2\frac{\nabla\Psi}{\Psi}\,\nabla(\Psi
F)-2\frac{|\nabla\Psi|^2}{\Psi}\,F - \frac{\partial\Psi}{\partial
t}\,F
\end{align}
with $\bar k=\max\left\{k_1,k_2\right\}$. This inequality holds in
the part of $B_{\rho,T}$ where $\Psi(x,t)$ is smooth and strictly
positive. Let $(x_1,t_1)$ be a maximum point for the function $\Psi
F$ in the set $\left\{(x,t)\in
M\times[0,\tau]\,|\dist(x,x_0,t)\le\rho\right\}$. We may assume
$(\Psi F)(x_1,t_1)>0$ without loss of generality. Indeed, if this is
not the case, then $F(x,\tau)\le0$ and~\eqref{sp-tm-loc} is evident
at $(x,\tau)$ whenever $\dist(x,x_0,\tau)<\frac\rho2$.  We may also
assume that $\Psi(x,t)$ is smooth at $(x_1,t_1)$ due to a standard
trick explained, for example, in~\cite[page 21]{RSSTY94}. Since
$(x_1,t_1)$ is a maximum point, the formulas $\Delta(\Psi
F)(x_1,t_1)\leq 0$, $\nabla(\Psi F)(x_1,t_1)= 0$, and $(\Psi
F)_t(x_1,t_1)\ge0$ hold true. Combined with~\eqref{aux1_sp-time},
they yield
\begin{align}\label{aux2_sp-time}
0& \geq 2F\nabla f\nabla\Psi \notag \\
&\hphantom{=}~+\left(\frac{2a\alpha t_1}{n}\left(|\nabla
f|^2-f_t\right)^2
-\left(|\nabla f|^2-\alpha f_t\right)-2k_1\alpha t_1|\nabla f|^2-\frac{\alpha t_1n}{2b}\,\bar k^2\right)\Psi \notag \\
&\hphantom{=}~+(\Delta \Psi)F-2\frac{|\nabla\Psi|^2}{\Psi}\,F
-\frac{\partial\Psi}{\partial t}\,F
\end{align}
at $(x_1,t_1)$. We will now use~\eqref{aux2_sp-time} to show that a
certain quadratic expression in $\Psi F$ is nonpositive. The desired
result will then follow.

Let us recall Lemma~\ref{lem_cutoff} and apply the Laplacian
comparison theorem to conclude that
\begin{align*}
-\frac{|\nabla\Psi|^2}{\Psi}&\geq-\frac{C_{\frac12}^2}{\rho^2}\,, \\
\Delta\Psi &\geq -\frac{C_{\frac12}}{\rho^2}
-\frac{C_{\frac12}\Psi^{\frac12}}{\rho}\,(n-1)\sqrt{k_1}\,\coth\left(\sqrt{k_1}\,\rho\right)\ge
-\frac{d_1}{\rho^2} -\frac{d_1\Psi^{\frac12}}{\rho}\,\sqrt{k_1}
\end{align*}
at the point $(x_1,t_1)$ with $d_1$ a positive constant depending on
$n$. There exists $\bar C>0$ such that the inequality
\begin{align*}
-\frac{\partial\Psi}{\partial t}&\geq -\frac{\bar
C\Psi^{\frac12}}{\tau}-C_{\frac12}\bar k\Psi^{\frac12}
\end{align*}
holds true; cf.~\eqref{tm-der-aux1}, \eqref{tm-der-aux3},
and~\eqref{tm-der-aux2}. Using these observations along
with~\eqref{aux2_sp-time}, we find the estimate
\begin{align*}
0& \geq -2F|\nabla f||\nabla\Psi| \\ &\hphantom{=}~+
\left(\frac{2a\alpha t_1}{n}\left(|\nabla
f|^2-f_t\right)^2-\left(|\nabla f|^2-\alpha f_t\right)
-2k_1\alpha t_1|\nabla f|^2-\frac{\alpha t_1n}{2b}\,\bar k^2\right)\Psi \\
&\hphantom{=}~+d_2\left(-\frac1{\rho^2}-\frac{\Psi^{\frac12}}{\rho}\,\sqrt{k_1}-\frac{\Psi^{\frac12}}{\tau}
-\bar k\Psi^{\frac12}\right)F
\end{align*}
at $(x_1,t_1)$. Here, $d_2$ is equal to
$\max\left\{3d_1,C_{\frac12},3C_{\frac12}^2,\bar C\right\}$. If one
further multiplies by $t\Psi$ and makes a few elementary
manipulations, one will obtain
\begin{align}\label{aux3_sp-time}
0& \geq -2t_1F\frac{C_{\frac12}\Psi^\frac32}{\rho}\,|\nabla f|
\notag
\\ &\hphantom{=}~+\frac{2t_1^2}n\left(a\alpha\left(\Psi|\nabla f|^2-\Psi
f_t\right)^2
-nk_1\alpha\Psi^2|\nabla f|^2-\frac{n^2\alpha}{4b}\,\bar k^2\Psi^2\right) \notag\\
&\hphantom{=}~+d_2t_1\left(-\frac1{\rho^2}-\frac{\sqrt{k_1}}{\rho}-\frac1\tau-\bar
k\right)(\Psi F)-\Psi F
\end{align}
at $(x_1,t_1)$. Our next step is to estimate the first two terms in
the right-hand side. In order to do so, we need a few auxiliary
pieces of notation.

Define $y=\Psi|\nabla f|^2$ and $z=\Psi f_t$. It is clear that
$y^{\frac12}(y-\alpha z)=\frac{\Psi^{\frac32}F|\nabla f|}{t}$ when
$t\ne0$, which yields
\begin{align*}
-2tF&\frac{C_{\frac12}\Psi^\frac32}{\rho}\,|\nabla f| \\
&+\frac{2t^2}n\left(a\alpha\left(\Psi|\nabla f|^2-\Psi f_t\right)^2
-nk_1\alpha\Psi^2|\nabla f|^2-\frac{n^2\alpha}{4b}\,\bar k^2\Psi^2\right) \\
&\geq \frac{2t^2}{n}\,\left(a\alpha(y-z)^2- nk_1\alpha
y-\frac{n^2\alpha}{4b}\,\bar
k^2\Psi^2-\frac{nC_{\frac12}}{\rho}\,y^{\frac12}(y-\alpha z)\right).
\end{align*}
Let us observe that
\begin{align*}(y-z)^2=\frac{1}{\alpha^2}\,(y-\alpha
z)^2+\frac{(\alpha-1)^2}{\alpha^2}\,y^2+\frac{2(\alpha-1)}{\alpha^2}\,y(y-\alpha
z)\end{align*} and plug this into the previous estimate. Regrouping
the terms and applying the inequality $\kappa_1v^2-\kappa_2v\ge
-\frac{\kappa_2^2}{4\kappa_1}$ valid for $\kappa_1,\kappa_2>0$ and
$v\in\mathbb R$, we obtain
\begin{align*}
-2tF&\frac{C_{\frac12}\Psi^\frac32}{\rho}\,|\nabla f| \\
&+\frac{2t^2}n\left(a\alpha\left(\Psi|\nabla f|^2-\Psi f_t\right)^2
-nk_1\alpha\Psi^2|\nabla f|^2-\frac{n^2\alpha}{4b}\,\bar k^2\Psi^2\right)\\
&\ge \frac{2t^2}{n}\left(\frac{a}{\alpha}\,(y-\alpha
z)^2-\frac{n^2k_1^2\alpha^3}{4a(\alpha-1)^2}\right)
\\ &\hphantom{=}~-\frac{2t^2}{n}\left(\frac{n^2d_2\alpha}{8a\rho^2(\alpha-1)}\,(y-\alpha
z)+\frac{n^2\alpha}{4b}\,\bar k^2\Psi^2\right).
\end{align*}
Because $t(y-\alpha z)=\Psi F$ by definition,~\eqref{aux3_sp-time}
now implies
\begin{align*}
0& \geq \frac{2a}{n\alpha}\,(\Psi
F)^2+\left(-\frac{nd_2t_1}{\rho^2}\left(\frac{\alpha}{a(\alpha-1)}
+1+\rho\sqrt{\bar k}+\frac{\rho^2}{\tau}+\rho^2\bar
k\right)-1\right)(\Psi
F)\\
&\hphantom{=}~-\frac{nk_1^2\alpha^3}{2a(\alpha-1)^2}\,t_1^2-\frac{\alpha
n}{2b}\,t_1^2\bar k^2\Psi^2
\\ &\geq \frac{2a}{n\alpha}\,(\Psi
F)^2+\left(-\frac{d_3t_1}{\rho^2}\left(\frac{\alpha}{a(\alpha-1)}
+\frac{\rho^2}{\tau}+\rho^2\bar k\right)-1\right)(\Psi
F)\\
&\hphantom{=}~-\frac{nk_1^2\alpha^3}{2a(\alpha-1)^2}\,t_1^2-\frac{\alpha
n}{2b}\,t_1^2\bar k^2\Psi^2\end{align*} at $(x_1,t_1)$ with
$d_3=4nd_2$. The expression in the last two lines is a polynomial in
$\Psi F$ of degree~2. Consequently, in accordance with the quadratic
formula,
\begin{align*}
\Psi F &\leq \frac{n\alpha}{2a}\left(\frac{d_3t_1}{\rho^2}
\left(\frac{\alpha}{a(\alpha-1)}+\frac{\rho^2}{\tau}+\rho^2 \bar
k\right)+1+
\frac{k_1\alpha}{\alpha-1}\,t_1+\sqrt{\frac{a}{b}\,}\,t_1\bar
k\Psi\right)
\end{align*}
at $(x_1,t_1)$. We will now use this conclusion to obtain a bound on
$F(x,\tau)$ for an appropriate range of $x\in M$.

Recall that $\Psi(x,\tau)=1$ whenever
$\dist(x,x_0,\tau)<\frac\rho2$. Besides, $(x_1,t_1)$ is a maximum
point for $\Psi F$ in the set $\left\{(x,t)\in
M\times[0,\tau]\,|\dist(x,x_0,t)\le\rho\right\}$. Hence
\begin{align*}
F(x,\tau)&=(\Psi F)(x,\tau)\le (\Psi F)(x_1,t_1) \\ &\le
\frac{n\alpha d_3\tau}{2a\rho^2}\left(\frac{\alpha}{a(\alpha-1)}
+\frac{\rho^2}{\tau}+\rho^2 \bar k\right) +\frac{n\alpha}{2a}
+\frac{nk_1\alpha^2}{2a(\alpha-1)}\,\tau +\frac{\alpha\tau n\bar
k}{2}\,\sqrt{\frac1{ab}\,}
\end{align*}
for all $x\in M$ such that $\dist(x,x_0,\tau)<\frac\rho2$. Since
$\tau\in(0,T]$ was chosen arbitrarily, this formula implies
\begin{align*}
\left(|\nabla f|^2-\alpha f_t\right)(x,t) & \le \frac{\alpha
d_4}{a\rho^2}\left(\frac{\alpha}{a(\alpha-1)}+\frac{\rho^2}{t}+\rho^2
\bar k\right) \\
&\hphantom{=}~+\frac{nk_1\alpha^2}{2a(\alpha-1)}+\frac{\alpha n\bar
k}2\,\sqrt{\frac1{ab}\,}\,,\qquad (x,t)\in B_{\frac\rho2,T},
\end{align*}
with $d_4=\max\left\{nd_3,n\right\}$ as long as $t\ne0$. If we set
$a=\frac1{2\alpha}$, note that $b=\frac1\alpha-a$, and define the
constant $C'$ appropriately, estimate~\eqref{sp-tm-loc} will follow
by a straightforward computation.
\end{proof}

\begin{remark}
The value $\frac1{2\alpha}$ for the parameter $a$ in the proof of
the theorem might not be optimal. It is not unlikely that a
different $a$ will lead to a sharper estimate.
\end{remark}

Let us now consider the case where the manifold $M$ is compact. We
will present a global estimate on $u(x,t)$ demanding that the Ricci
curvature of $M$ be nonnegative. A related inequality
for~\eqref{Riccifloweqn}--\eqref{heateqn} may be found
in~\cite{CG02}.

\begin{theorem}\label{thm_sp-tm-global}
Suppose the manifold $M$ is compact and
$\big(M,g(x,t)\big)_{t\in[0,T]}$ is a solution to the Ricci
flow~\eqref{Riccifloweqn}. Assume that $0\le\Ric(x,t)\leq kg(x,t)$
for some $k>0$ and all $(x,t)\in M\times[0,T]$. Consider a smooth
positive function $u:M\times[0,T]\to\mathbb R$ satisfying the heat
equation~\eqref{heateqn}. The estimate
\begin{align}\label{sp-tm-glob} \frac{|\nabla
u|^2}{u^2}-\frac{u_t}{u}\leq kn + {n\over 2 t}
\end{align} holds for all $(x,t)\in M\times(0,T]$.
\end{theorem}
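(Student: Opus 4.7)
The plan is to apply the parabolic maximum principle directly, exploiting the compactness of $M$, in the spirit of the short argument used in the proof of Theorem~\ref{thm_sp-only-global}. Set $f=\log u$; the heat equation gives $f_t=\Delta f+|\nabla f|^2$, so $|\nabla f|^2-f_t=-\Delta f$, and the function $F:=t(|\nabla f|^2-f_t)=-t\Delta f$ is well-defined on $M\times[0,T]$. The target estimate is equivalent to $F\le\tfrac{n}{2}+nkt$ on $M\times(0,T]$.

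The key step is to derive a lower bound on $(\Delta-\partial_t)F$. The ingredients are Bochner's formula $\Delta|\nabla f|^2=2|\nabla^2 f|^2+2\nabla f\nabla\Delta f+2\Ric(\nabla f,\nabla f)$ and the Ricci-flow commutator $\partial_t\Delta f=\Delta f_t+2R^{ij}f_{ij}$. This parallels the setup of Lemma~\ref{lem_LY_comp} with $\alpha=1$, $k_1=0$, $k_2=k$, but rather than splitting $|\nabla^2 f|^2$ through the free parameter $a$, I would complete the square as $|\nabla^2 f+\tfrac12\Ric|^2\ge\tfrac{1}{n}(\Delta f+\tfrac{R}{2})^2$, which preserves the full Bochner coefficient. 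After substituting $\Delta f=-F/t$ and dropping the nonnegative term $2t\Ric(\nabla f,\nabla f)$, the resulting inequality takes the form
\begin{align*}
(\Delta-\partial_t)F\ge -2\nabla f\nabla F-\frac{F}{t}+\frac{2F^2}{nt}-\frac{2RF}{n}+\frac{tR^2}{2n}-\frac{t|\Ric|^2}{2},
\end{align*}
where $R$ denotes the scalar curvature of $g$.

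Next, fix an arbitrary $\tau\in(0,T]$ and pick a maximum point $(x_1,t_1)$ of $F$ on the compact set $M\times[0,\tau]$. If $F(x_1,t_1)\le 0$ or $t_1=0$, the required bound is immediate. Otherwise the first- and second-order conditions at the maximum give $(\Delta-\partial_t)F(x_1,t_1)\le 0$. Converting the preceding display into a scalar inequality at $(x_1,t_1)$ and applying the hypotheses in the sharpened forms $R\le nk$ and $|\Ric|^2\le kR$ (both valid because the eigenvalues of $\Ric$ with respect to $g$ lie in $[0,k]$) yields a quadratic inequality in $F(x_1,t_1)$ whose larger root can be shown to be at most $\tfrac{n}{2}+nkt_1\le\tfrac{n}{2}+nk\tau$. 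Thus $F(x,\tau)\le F(x_1,t_1)\le\tfrac{n}{2}+nk\tau$, and since $\tau\in(0,T]$ is arbitrary, the theorem follows.

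The main obstacle is obtaining the sharp constants. The coefficient $\tfrac{n}{2t}$ is delicate: a crude Cauchy--Schwarz bound on the Ricci-flow correction $2tR^{ij}f_{ij}$ would be paid for out of $2t|\nabla^2 f|^2$ and produce an $\tfrac{n}{t}$ term instead of $\tfrac{n}{2t}$. The completion-of-the-square trick above avoids that loss but introduces an error of the form $\tfrac{tR^2}{2n}-\tfrac{t|\Ric|^2}{2}$; here the refined inequality $|\Ric|^2\le kR$ (strictly stronger than the naive bound $|\Ric|^2\le nk^2$) is precisely what allows the resulting quadratic in $F$ to factor so as to give exactly the constant $nk$ in the final estimate, with no spurious $k^2 t^2$ term.
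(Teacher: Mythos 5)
Your proposal is correct and arrives at exactly the constant $\tfrac n2+nkt$, but it does so by a genuinely different route from the paper. The paper invokes Lemma~\ref{lem_LY_comp}, where the Hessian term is split as $|\nabla^2 f|^2 = a|\nabla^2 f|^2 + b|\nabla^2 f|^2$ (with $a+b=1/\alpha$) and $\Ric$ is absorbed by the crude bound $|\Ric|^2\le n\max\{k_1^2,k_2^2\}$; the free parameter $a$ is then optimized \emph{a posteriori} at the value $a=\frac{1+kt_0}{1+2kt_0}$, which makes the discriminant a perfect square and yields the clean bound $F_1\le t_0kn+\tfrac n2$. Your argument replaces the parameter game with the single completion $|\nabla^2 f+\tfrac12\Ric|^2\ge\tfrac1n(\Delta f+\tfrac R2)^2$, drops $2t\Ric(\nabla f,\nabla f)\ge0$, and then feeds in the two refined scalar bounds $R\le nk$ and $|\Ric|^2\le kR$ (the latter being a pointwise spectral bound that is strictly stronger than $|\Ric|^2\le nk^2$ unless every eigenvalue equals $k$). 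I have checked the algebra: at the interior maximum the resulting scalar inequality is $F^2-(t_1R+\tfrac n2)F+\tfrac{t_1^2}{4}\bigl(R^2-n|\Ric|^2\bigr)\le0$, its discriminant is $nt_1R+\tfrac{n^2}4+nt_1^2|\Ric|^2\le(nkt_1+\tfrac n2)^2$ after the two bounds, and the larger root is indeed $\le nkt_1+\tfrac n2$. The trade-off: the paper's parametrized lemma is reusable for both the local Theorem~\ref{thm_sp-tm-local} and the boundary Theorem~\ref{thm_LY_bdy}, whereas your completion is tailored to $\alpha=1$, $k_1=0$; on the other hand your argument is self-contained, avoids the optimization step, and makes transparent exactly which curvature information ($R\le nk$ together with $|\Ric|^2\le kR$) is responsible for the sharp constant.
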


\begin{proof}
As before, we write $f$ instead of $\log u$. It will be convenient
for us to denote $F_1=t\left(|\nabla f|^2-f_t\right)$. Fix
$\tau\in(0,T]$ and choose a point $(x_0,t_0)\in M\times[0,\tau]$
where $F_1$ attains its maximum on $M\times[0,\tau]$. Our first step
is to show that
\begin{align}\label{F1_nobdy}
F_1(x_0,t_0)\le t_0kn+\frac n2\,.
\end{align}
The assertion of the theorem will follow therefrom.

If $t_0=0$, then $F_1(x,t_0)$ is equal to~0 for every $x\in M$ and
estimate~\eqref{F1_nobdy} becomes evident. Consequently, we can
assume $t_0>0$ without loss of generality. Lemma~\ref{lem_LY_comp}
and our conditions on the Ricci curvature of $M$ imply the
inequality
\begin{align*}
\left(\Delta-\frac\partial{\partial t}\right)F_1\ge-2\nabla f\nabla
F_1+\frac{2a}n\frac{F_1^2}{t_0}-\frac{F_1}{t_0}-\frac{t_0n}{2(1-a)}\,k^2
\end{align*}
for all $a\in(0,1)$ at the point $(x_0,t_0)$. Now recall that $F_1$
attains its maximum at $(x_0,t_0)$. This tells us that $\Delta
F_1(x_0,t_0)\le0$, $\frac\partial{\partial t}F_1(x_0,t_0)\ge0$, and
$\nabla F_1(x_0,t_0)=0$. In consequence, the estimate
\begin{align*}
\frac{2a}n\frac{F_1^2}{t_0}-\frac{F_1}{t_0}-\frac{t_0n}{2(1-a)}\,k^2\le0
\end{align*}
holds at $(x_0,t_0)$, and the quadratic formula yields
\begin{align*}
F_1(x_0,t_0)\le\frac
n{4a}\left(1+\sqrt{1+\frac{4at_0^2}{1-a}\,k^2}\,\right).
\end{align*}
The expression in the right-hand side is minimized in $a\in(0,1)$
when $a$ is equal to $\frac{1+kt_0}{1+2kt_0}\,$. Plugging this value
of $a$ into the above inequality, we arrive at~\eqref{F1_nobdy}.

Only a simple argument is now needed to complete the proof. The fact
that $(x_0,t_0)$ is a maximum point for $F_1$ on $M\times[0,\tau]$
enables us to conclude that
\begin{align*}
F_1(x,\tau)\le F_1(x_0,t_0)\le t_0kn+\frac n2\le\tau kn+\frac n2
\end{align*}
for all $x\in M$. Therefore, the estimate
\begin{align*}
\frac{|\nabla u|^2}{u^2} -\frac{u_t}{u}\le kn+\frac{n}{2\tau}
\end{align*}
holds at $(x,\tau)$. Because the number $\tau\in(0,T]$ can be chosen
arbitrarily, the assertion of the theorem follows.
\end{proof}

Our last goal in this section is to state two Harnack inequalities
for~\eqref{Riccifloweqn}--\eqref{heateqn}. These may be viewed as
applications of Theorems~\ref{thm_sp-tm-local}
and~\ref{thm_sp-tm-global}; cf., for
example,~\cite[Chapter~IV]{RSSTY94}. One can find other Harnack
inequalities for~\eqref{Riccifloweqn}--\eqref{heateqn} in the
papers~\cite{CG02,LN04}. We first introduce a piece of notation.
Given $x_1,x_2\in M$ and $t_1,t_2\in(0,T)$ satisfying $t_1<t_2$,
define
\begin{align*}
\Gamma(x_1,t_1,x_2,t_2)=\inf\int\limits_{t_1}^{t_2}\left|\frac{d}{dt}\gamma(t)\right|^2dt.
\end{align*}
The infimum is taken over the set $\Theta(x_1,t_1,x_2,t_2)$ of all
the smooth paths $\gamma:[t_1,t_2]\to M$ that connect~$x_1$
to~$x_2$. We remind the reader that the norm~$|\cdot|$ depends
on~$t$. Let us now present a lemma. It will be the key to the proof
of our results.

\begin{lemma}\label{lemma_Harnack}
Suppose $\big(M,g(x,t)\big)_{t\in[0,T]}$ is a complete solution to
the Ricci flow~\eqref{Riccifloweqn}. Let $u: M\times [0,T]\to
\mathbb{R}$ be a smooth positive function satisfying the heat
equation~\eqref{heateqn}. Define $f=\log u$ and assume that
\begin{align*}\frac{\partial f}{\partial t}\geq\frac{1}{A_1}\left(|\nabla f|^2-A_2-\frac{A_3}{t}\right),
\qquad x\in M,~t\in(0,T],\end{align*} for some $A_1,A_2,A_3>0$. Then
the inequality
\begin{align*}
u(x_2,t_2)\geq
u(x_1,t_1)\left(\frac{t_2}{t_1}\right)^{-\frac{A_3}{A_1}}
\exp\left(-\frac{A_1}4\Gamma(x_1,t_1,x_2,t_2)-\frac{A_2}{A_1}\,(t_2-t_1)\right)
\end{align*}
holds for all $(x_1,t_1)\in M\times(0,T)$ and $(x_2,t_2)\in
M\times(0,T)$ such that $t_1<t_2$.
\end{lemma}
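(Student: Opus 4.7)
The plan is to integrate $\frac{d}{dt} f(\gamma(t),t)$ along an arbitrary smooth path $\gamma: [t_1,t_2] \to M$ connecting $x_1$ to $x_2$, bound the result from below using the hypothesis on $f_t$, and then take the infimum over such paths to bring in $\Gamma(x_1,t_1,x_2,t_2)$.

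First, I would compute
\begin{align*}
\frac{d}{dt} f(\gamma(t),t) = \nabla f \cdot \dot\gamma(t) + f_t(\gamma(t),t),
\end{align*}
where $\cdot$ and $\nabla$ are with respect to $g(\cdot,t)$. The Cauchy--Schwarz inequality together with the weighted AM--GM inequality gives
\begin{align*}
\nabla f \cdot \dot\gamma \geq -|\nabla f|\,|\dot\gamma| \geq -\frac{|\nabla f|^2}{A_1} - \frac{A_1}{4}\,|\dot\gamma|^2,
\end{align*}
where the weights are chosen precisely so that the $|\nabla f|^2/A_1$ term will cancel against the corresponding term coming from the hypothesis on $f_t$. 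Adding this to the assumed lower bound $f_t \geq \tfrac{1}{A_1}(|\nabla f|^2 - A_2 - A_3/t)$, the gradient terms cancel and I obtain
\begin{align*}
\frac{d}{dt} f(\gamma(t),t) \geq -\frac{A_1}{4}\,|\dot\gamma|^2 - \frac{A_2}{A_1} - \frac{A_3}{A_1 t}.
\end{align*}

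Integrating from $t_1$ to $t_2$ produces
\begin{align*}
f(x_2,t_2) - f(x_1,t_1) \geq -\frac{A_1}{4} \int_{t_1}^{t_2} |\dot\gamma(t)|^2\,dt - \frac{A_2}{A_1}(t_2-t_1) - \frac{A_3}{A_1}\log\frac{t_2}{t_1}.
\end{align*}
Since $\gamma \in \Theta(x_1,t_1,x_2,t_2)$ was arbitrary, I then take the infimum of the right-hand side over all such paths; the first term yields exactly $-\tfrac{A_1}{4}\Gamma(x_1,t_1,x_2,t_2)$. Finally, exponentiating $f = \log u$ gives the desired inequality.

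There is no serious obstacle here: the argument is a one-variable integration along a path. The only subtlety worth highlighting is the choice of weights in the AM--GM step, which must be tuned so that the $|\nabla f|^2$ contribution from the hypothesis is exactly absorbed; any other choice would leave a residual $|\nabla f|^2$ term that could not be handled without further information about $u$. One small care point is that the norm $|\cdot|$ and gradient $\nabla$ depend on $t$, but this dependence is built into the definition of $\Gamma$ via the integrand $|\tfrac{d}{dt}\gamma(t)|^2$, so no additional estimate is required.
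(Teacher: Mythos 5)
Your proof is correct and follows essentially the same path-integration argument as the paper; the only cosmetic difference is that you split the cross term via Young's inequality before adding the hypothesis on $f_t$, whereas the paper completes the square in $|\nabla f|$ directly using $\kappa_1 v^2 - \kappa_2 v \ge -\kappa_2^2/(4\kappa_1)$, and the two steps yield the identical bound. One small wording slip: at the end you say you take the \emph{infimum} of the right-hand side over paths, but since the inequality holds for every $\gamma$ you actually want its \emph{supremum}, which corresponds to the infimum of $\int_{t_1}^{t_2}|\dot\gamma|^2\,dt$, i.e.\ to $\Gamma(x_1,t_1,x_2,t_2)$; this does not affect the substance of the argument.
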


\begin{proof}
The method we use is rather traditional; see, for
example,~\cite[Chapter~IV]{RSSTY94} and~\cite{XCRH09}. Consider a
path $\gamma(t)\in\Theta(x_1,t_1,x_2,t_2)$. We begin by computing
\begin{align*}
\frac{d}{dt}f(\gamma(t),t)&=\nabla
f(\gamma(t),t)\frac{d}{dt}\gamma(t)+\frac{\partial}{\partial s}f(\gamma(t),s)|_{s=t} \\
&\geq-|\nabla f(\gamma(t),t)|\left|\frac{d}{dt}\gamma(t)\right|
+\frac{1}{A_1}\left(|\nabla f(\gamma(t),t)|^2-A_2-\frac{A_3}{t}\right) \\
&\geq
-\frac{A_1}4\left|\frac{d}{dt}\gamma(t)\right|^2-\frac1{A_1}\left(A_2+\frac{A_3}{t}\right),\qquad
t\in[t_1,t_2].
\end{align*}
The last step is a consequence of the inequality
$\kappa_1v^2-\kappa_2v\geq-\frac{\kappa_2^2}{4\kappa_1}$ valid for
$\kappa_1,\kappa_2>0$ and $v\in\mathbb{R}$. The above implies
\begin{align*}
f(x_2,t_2)-f(x_1,t_1)&=
\int\limits_{t_1}^{t_2}\frac{d}{dt}f(\gamma(t),t)\,dt
\\ &\ge-\frac{A_1}4\int\limits_{t_1}^{t_2}\left|\frac{d}{dt}\gamma(t)\right|^2\,dt
-\frac{A_2}{A_1}(t_2-t_1)-\frac{A_3}{A_1}\ln\frac{t_2}{t_1}.
\end{align*}
The assertion of the lemma follows by exponentiating.
\end{proof}

We are ready to formulate our Harnack inequalities
for~\eqref{Riccifloweqn}--\eqref{heateqn}. The first one applies on
noncompact manifolds. The second one does not, but it provides a
more explicit estimate.

\begin{theorem}
Let $\big(M,g(x,t)\big)_{t\in[0,T]}$ be a complete solution to the
Ricci flow~\eqref{Riccifloweqn}. Assume that
$-k_1g(x,t)\le\Ric(x,t)\leq k_2g(x,t)$ for some $k_1,k_2>0$ and all
$(x,t)\in M\times[0,T]$. Suppose a smooth positive function
$u:M\times[0,T]\to\mathbb R$ satisfies the heat
equation~\eqref{heateqn}. Given $\alpha>1$, the estimate
\begin{align*}
u(x_2,t_2)&\ge u(x_1,t_1)\left(\frac{t_2}{t_1}\right)^{-C'\alpha}
\\ &\hphantom{=}~\exp\left(-\frac{\alpha}4\Gamma(x_1,t_1,x_2,t_2)-\left(C'\alpha
\max\left\{k_1,k_2\right\}+\frac{nk_1\alpha^2}{\alpha-1}\right)(t_2-t_1)\right)
\end{align*}
holds for all $(x_1,t_1)\in M\times(0,T)$ and $(x_2,t_2)\in
M\times(0,T)$ such that $t_1<t_2$. The constant $C'$ comes from
Theorem~\ref{thm_sp-tm-local}.
\end{theorem}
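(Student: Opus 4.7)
The plan is to reduce the statement to a direct application of Lemma~\ref{lemma_Harnack}, using Theorem~\ref{thm_sp-tm-local} as the source of the differential Harnack inequality. The key point is that Theorem~\ref{thm_sp-tm-local} is a \emph{local} estimate valid on any $B_{\rho,T}$, but under the global curvature bound $-k_1g(x,t)\le\Ric(x,t)\le k_2g(x,t)$ assumed here, the constants $k_1,k_2$ do not deteriorate with~$\rho$. We may therefore apply that theorem on an arbitrary ball and let $\rho\to\infty$ to obtain a global estimate.

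First I would fix a point $x\in M$ and $t\in(0,T]$, and for each $\rho>0$ apply Theorem~\ref{thm_sp-tm-local} centered at~$x$ (noting that $(x,t)\in B_{\frac\rho2,T}$ for all sufficiently large~$\rho$). Sending $\rho\to\infty$ kills the $\frac{\alpha^2}{\rho^2(\alpha-1)}$ term and yields
\begin{align*}
\frac{|\nabla u|^2}{u^2}-\alpha\frac{u_t}u\le C'\alpha^2\Big(\frac1t+\max\{k_1,k_2\}\Big)+\frac{nk_1\alpha^3}{\alpha-1}
\end{align*}
on all of $M\times(0,T]$. Writing $f=\log u$ and rearranging, this is exactly
\begin{align*}
f_t\ge\frac{1}{\alpha}\Big(|\nabla f|^2-A_2-\frac{A_3}{t}\Big),\qquad x\in M,~t\in(0,T],
\end{align*}
with $A_1=\alpha$, $A_2=C'\alpha^2\max\{k_1,k_2\}+\frac{nk_1\alpha^3}{\alpha-1}$, and $A_3=C'\alpha^2$.

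Next I would invoke Lemma~\ref{lemma_Harnack} with these values of $A_1,A_2,A_3$. Since $A_3/A_1=C'\alpha$, $A_1/4=\alpha/4$, and $A_2/A_1=C'\alpha\max\{k_1,k_2\}+\frac{nk_1\alpha^2}{\alpha-1}$, the conclusion of the lemma reproduces the stated Harnack inequality verbatim.

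The only delicate step is the passage $\rho\to\infty$: one must verify that, for each fixed $(x,t)$, Theorem~\ref{thm_sp-tm-local} can be applied with \emph{the same} constants $k_1,k_2$ on $B_{\rho,T}$ for all large~$\rho$. This is immediate here because the curvature bound is assumed globally on $M\times[0,T]$, so no obstacle actually arises; completeness of $g(x,t)$ ensures $B_{\rho,T}$ is a genuine parabolic ball and covers any prescribed point once $\rho$ is large enough. With this routine verification in place, the proof collapses to a one-line substitution into Lemma~\ref{lemma_Harnack}.
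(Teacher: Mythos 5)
Your proposal is correct and is essentially identical to the paper's proof: the paper also lets $\rho\to\infty$ in Theorem~\ref{thm_sp-tm-local} to get the global differential inequality and then plugs $A_1=\alpha$, $A_2=C'\alpha^2\max\{k_1,k_2\}+\frac{nk_1\alpha^3}{\alpha-1}$, $A_3=C'\alpha^2$ into Lemma~\ref{lemma_Harnack}. Your additional remark on why the passage $\rho\to\infty$ is unproblematic (the curvature bound is global, so $k_1,k_2$ are uniform in $\rho$) is a sound observation that the paper leaves implicit.
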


\begin{proof}
Letting $\rho$ go to infinity in~\eqref{sp-tm-loc}, we conclude that
\begin{align*}
\frac{u_t}{u}\ge\frac1\alpha\left(\frac{|\nabla
u|^2}{u^2}-\frac{C'\alpha^2}t-\left(C'\alpha^2\max\left\{k_1,k_2\right\}+\frac{nk_1\alpha^3}{\alpha-1}\right)\right)
\end{align*}
on $M\times(0,T]$. The desired assertion is now a consequence of
Lemma~\ref{lemma_Harnack}.
\end{proof}

\begin{theorem}
Suppose $M$ is compact and $\big(M,g(x,t)\big)_{t\in[0,T]}$ is a
solution to the Ricci flow~\eqref{Riccifloweqn}. Assume that
$0\le\Ric(x,t)\leq kg(x,t)$ for some $k>0$ and all $(x,t)\in
M\times[0,T]$. Consider a smooth positive function
$u:M\times[0,T]\to\mathbb R$ satisfying the heat
equation~\eqref{heateqn}. The estimate
\begin{align*}
u(x_2,t_2)\geq u(x_1,t_1)\left(\frac{t_2}{t_1}\right)^{-\frac n2}
\exp\left(-\frac14\Gamma(x_1,t_1,x_2,t_2)-kn(t_2-t_1)\right)
\end{align*}
holds for all $(x_1,t_1)\in M\times(0,T)$ and $(x_2,t_2)\in
M\times(0,T)$ as long as $t_1<t_2$.
\end{theorem}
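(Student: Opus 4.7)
The plan is to mirror the proof of the preceding Harnack inequality, but use the global space-time gradient estimate (Theorem~\ref{thm_sp-tm-global}) in place of its local counterpart. That theorem already gives us
\begin{align*}
\frac{|\nabla u|^2}{u^2}-\frac{u_t}u\le kn+\frac n{2t},\qquad (x,t)\in M\times(0,T],
\end{align*}
under exactly the hypotheses of the present theorem, and Lemma~\ref{lemma_Harnack} is designed precisely to convert such a differential inequality into a Harnack-type comparison along space-time paths.

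First I would set $f=\log u$ and rewrite the estimate from Theorem~\ref{thm_sp-tm-global} as
\begin{align*}
f_t\ge|\nabla f|^2-kn-\frac n{2t},\qquad (x,t)\in M\times(0,T].
\end{align*}
This is precisely the hypothesis of Lemma~\ref{lemma_Harnack} with parameters $A_1=1$, $A_2=kn$, and $A_3=n/2$.

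Next I would apply Lemma~\ref{lemma_Harnack} directly with these values. Substituting into the lemma's conclusion yields
\begin{align*}
u(x_2,t_2)\ge u(x_1,t_1)\left(\frac{t_2}{t_1}\right)^{-n/2}\exp\left(-\frac14\Gamma(x_1,t_1,x_2,t_2)-kn(t_2-t_1)\right),
\end{align*}
which is exactly the desired estimate. There is no genuine obstacle here; the theorem is a direct corollary, and the only thing to verify is the bookkeeping of constants (the exponent $-n/2$ of $t_2/t_1$ comes from $-A_3/A_1$, the coefficient $1/4$ in front of $\Gamma$ from $A_1/4$, and the term $kn(t_2-t_1)$ from $A_2(t_2-t_1)/A_1$). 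Since the hypothesis of Lemma~\ref{lemma_Harnack} was stated on all of $M\times(0,T]$ and Theorem~\ref{thm_sp-tm-global} gives the required bound globally, no passage to a limit in $\rho$ is needed, unlike in the previous noncompact Harnack inequality.
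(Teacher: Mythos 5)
Your proof is correct and is essentially identical to the paper's: both invoke Theorem~\ref{thm_sp-tm-global} to obtain $f_t\ge|\nabla f|^2-kn-\frac n{2t}$ and then apply Lemma~\ref{lemma_Harnack} with $A_1=1$, $A_2=kn$, $A_3=n/2$.
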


\begin{proof}
Theorem~\ref{thm_sp-tm-global} implies \begin{align*}
\frac{u_t}u\geq \frac{|\nabla u|^2}{u^2}-kn-\frac{n}{2t},\qquad x\in
M,~t\in(0,T].
\end{align*}
One may now use Lemma~\ref{lemma_Harnack} to complete the proof.
\end{proof}

\section{Manifolds with boundary}\label{sec_mf_w_bdy}
This section considers a compact manifold with boundary evolving
under the Ricci flow and offers heat equation estimates on this
manifold. We will present variants of
Theorems~\ref{thm_sp-only-global} and~\ref{thm_sp-tm-global}. The
proofs are largely based on the Hopf maximum principle.

\subsection{The Ricci flow}\label{subsec_RFMWB}

Suppose $M$ is a compact, connected, oriented, smooth manifold with
nonempty boundary~$\partial M$. Consider a Riemannian metric
$g(x,t)$ on $M$ that evolves under the Ricci flow. The parameter $t$
runs through the interval $[0,T]$. We investigate the case where the
boundary $\partial M$ remains umbilic for all $t\in[0,T]$. More
precisely, given a smooth nonnegative function $\lambda(t)$ on
$[0,T]$, we assume that $\big(M,g(x,t)\big)_{t\in[0,T]}$ is a
solution to the problem
\begin{align}\label{BVP_Ricci}
&\frac{\partial}{\partial t}g(x,t)=-2\Ric(x,t), & &x\in M,~t\in[0,T], \notag \\
&\II(x,t)=\lambda(t)g(x,t), & &x\in\partial M,~t\in[0,T].
\end{align}
In the second line, $g(x,t)$ is understood to be restricted to the
tangent bundle of $\partial M$. The notation $\II(x,t)$ here stands
for the second fundamental form of $\partial M$ with respect to
$g(x,t)$. That is,
\begin{align*}\II(X,Y)=\left(D_X\frac\partial{\partial\nu}\right)Y\end{align*} if
$X$ and $Y$ are tangent to the boundary at the same point. The
letter $D$ refers to the Levi-Civita connection corresponding to
$g(x,t)$, and $\frac\partial{\partial\nu}$ is the outward unit
normal vector field on $\partial M$ with respect to $g(x,t)$.

We should explain that problem~\eqref{BVP_Ricci} has different
geometric meanings for different choices of the function
$\lambda(t)$. E.g., let us assume that $\lambda(t)$ is equal to the
same constant $\lambda_0$ for all $t\in[0,T]$. The
papers~\cite{YS96,JC09} discuss this case in detail. Theorem~3
in~\cite{JC09} suggests that the Ricci flow~\eqref{BVP_Ricci}, if
normalized so as to preserve the volume of~$M$, takes a sufficiently
well-behaved Riemannian metric on $M$ to a metric with totally
geodesic boundary. An example of such an evolution is shown in
Figure~1.

\begin{center}
\includegraphics[height=3cm]{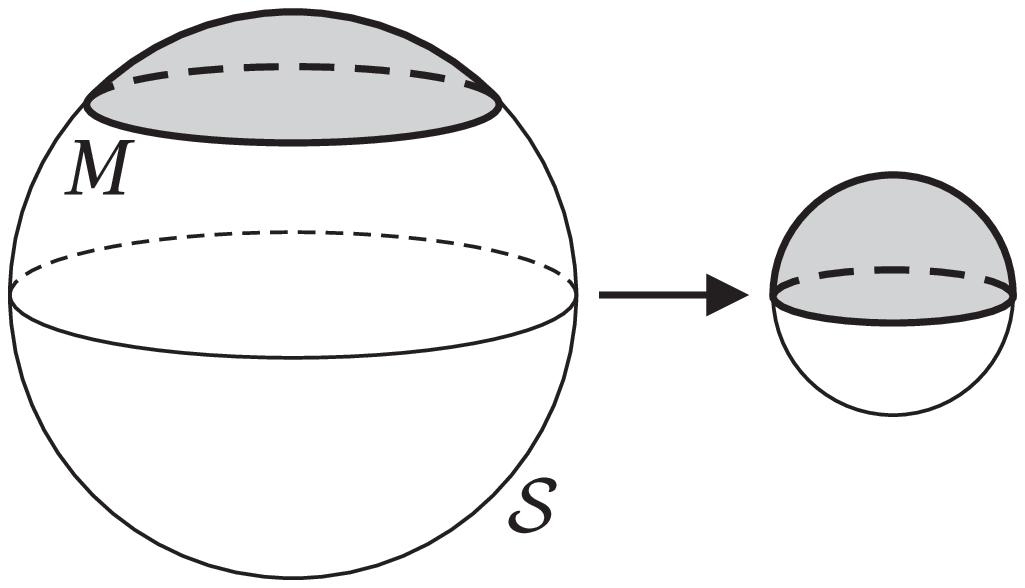}

Figure 1. The Ricci flow~\eqref{BVP_Ricci} with
$\lambda(t)=\lambda_0$ after the normalization.
\end{center}

By letting $\lambda(t)$ be a nontrivial function of $t$, we allow
our results to include several cases which are, in a sense, more
natural than the one just described. For instance, suppose we apply
the Ricci flow to the sphere~$\mathcal S$ in Figure~1. The manifold
$M$ will then evolve along with~$\mathcal S$. This evolution will be
described by equations~\eqref{BVP_Ricci} with $\lambda(t)$ equal to
some nonconstant function $\lambda_1(t)$. We provide an illustration
in Figure~2.

\begin{center}
\includegraphics[height=3cm]{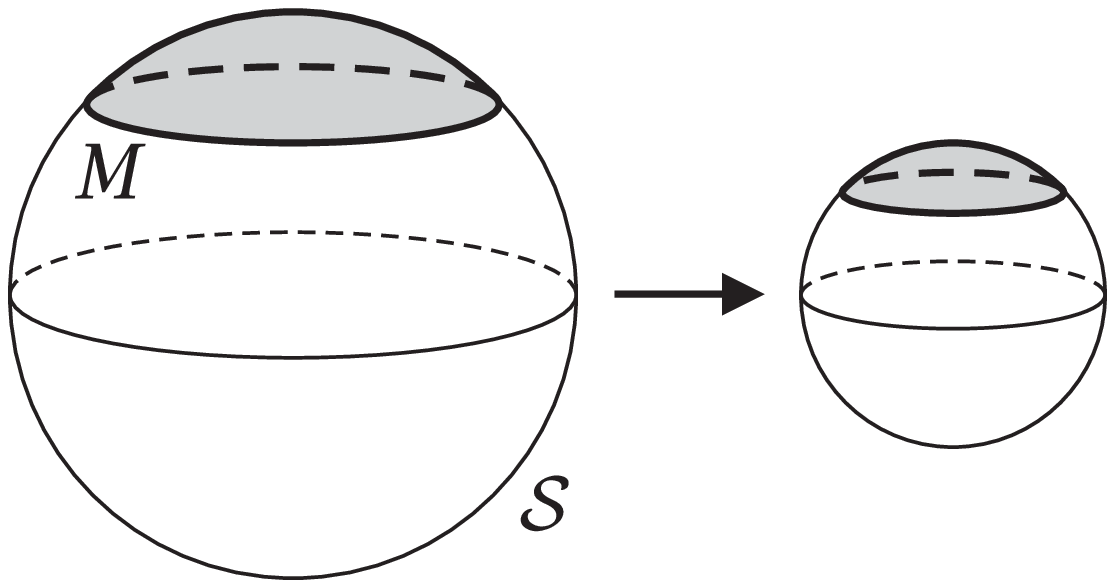}

Figure~2. The Ricci flow~\eqref{BVP_Ricci} with
$\lambda(t)=\lambda_1(t)$ before the normalization.
\end{center}

Let us normalize the Ricci flow on the sphere~$\mathcal S$ so as to
preserve the volume of~$\mathcal S$. It is well-known that~$\mathcal
S$ will then remain unchanged for all~$t$. Analogously, we can
normalize the Ricci flow~\eqref{BVP_Ricci} with
$\lambda(t)=\lambda_1(t)$ so as to preserve the volume of $M$. This
will allow a better comparison with the situation shown in Figure~1.
After such a normalization, the flow will keep $M$ unchanged for
all~$t$.

\subsection{Gradient estimates}\label{subsec_GEMWB}

Let us recollect some notation. The operator $\Delta$ is the
Laplacian given by the metric $g(x,t)$. We write~$\nabla$
and~$|\cdot|$ for the gradient and the norm with respect
to~$g(x,t)$. Our attention will be centered round the heat equation
\begin{align}\label{heat_eq_bdy}
\left(\Delta-\frac{\partial}{\partial t}\right)u(x,t)=0,\qquad x\in
M,~t\in[0,T],
\end{align}
with the Neumann boundary condition
\begin{align}\label{Neumann_BC}
\frac{\partial}{\partial\nu}u(x,t)=0,\qquad x\in\partial
M,~t\in[0,T].
\end{align}
The results in this section still hold, with obvious modifications,
if the solution $u(x,t)$ is only defined on $M\times(0,T]$. In this
case, one just has to replace $u(x,t)$ and $g(x,t)$ with
$u(x,t+\epsilon)$ and $g(x,t+\epsilon)$ for a sufficiently small
$\epsilon>0$, apply the corresponding theorem, and then let
$\epsilon$ go to~0.

Our first result is a space-only estimate. It is analogous
to~\eqref{sp-on-glob}.

\begin{theorem}\label{thm_space_bdy}
Let $\big(M,g(x,t)\big)_{t\in[0,T]}$ be a solution to the Ricci
flow~\eqref{BVP_Ricci}. Suppose $u(x,t):M\times[0,T]\to\mathbb R$ is
a smooth positive function satisfying the heat
equation~\eqref{heat_eq_bdy} with the Neumann boundary
condition~\eqref{Neumann_BC}. Then the estimate
\begin{align}\label{grad_sp_only_bdy}
\frac{|\nabla u|}u\le\sqrt{\frac1t\log\frac A{u}\,},\qquad x\in
M,~t\in(0,T],
\end{align} holds with $A=\sup_Mu(x,0)$.
\end{theorem}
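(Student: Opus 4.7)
The plan is to mimic the proof of Theorem~\ref{thm_sp-only-global} and reduce the estimate to a maximum principle argument for the auxiliary function
\begin{align*}
P(x,t)=t\,\frac{|\nabla u|^2}{u}-u\log\frac A u,\qquad (x,t)\in M\times[0,T].
\end{align*}
The interior calculation from Theorem~\ref{thm_sp-only-global} applies verbatim and gives $(\Delta-\partial_t)P\ge0$ on $M\times[0,T]$. The initial condition $P(x,0)\le0$ also carries over, since $A\ge u(x,0)$ by definition. The new ingredient, and the only real issue, is what happens on the lateral boundary $\partial M\times[0,T]$, where now one must invoke the Hopf boundary point lemma in place of the classical parabolic maximum principle used on closed manifolds.

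The heart of the proof will therefore be the claim that $\frac{\partial P}{\partial\nu}\le0$ on $\partial M\times[0,T]$. Since $\frac{\partial u}{\partial\nu}=0$ by \eqref{Neumann_BC}, the term $u\log(A/u)$ contributes nothing to $\frac{\partial P}{\partial\nu}$, so it suffices to show $\frac{\partial}{\partial\nu}|\nabla u|^2\le0$. Using the Neumann condition, the vector field $\nabla u$ is tangent to $\partial M$. A short Hessian computation gives
\begin{align*}
\tfrac12\,\frac{\partial}{\partial\nu}|\nabla u|^2 = \nabla^2 u(\nu,\nabla u) = (\nabla u)\bigl(\tfrac{\partial u}{\partial\nu}\bigr)-\bigl(\nabla_{\nabla u}\nu\bigr)u.
\end{align*}
The first term vanishes on $\partial M$ because $\frac{\partial u}{\partial\nu}$ is identically zero there and $\nabla u$ is tangential. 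For the second term, the umbilicity assumption $\II(x,t)=\lambda(t)g(x,t)$ translates into $\nabla_X\nu=\lambda(t)X$ for every $X$ tangent to $\partial M$; applying this with $X=\nabla u$ gives $\bigl(\nabla_{\nabla u}\nu\bigr)u=\lambda(t)|\nabla u|^2$. Hence
\begin{align*}
\frac{\partial}{\partial\nu}|\nabla u|^2 = -2\lambda(t)|\nabla u|^2 \le 0
\end{align*}
on $\partial M\times[0,T]$, since $\lambda(t)\ge0$ by hypothesis. Multiplying by $t/u$ shows $\frac{\partial P}{\partial\nu}\le0$ on the lateral boundary.

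With these three facts in hand (subsolution in the interior, $P\le0$ at $t=0$, and $\frac{\partial P}{\partial\nu}\le0$ on $\partial M\times[0,T]$), the parabolic maximum principle combined with the Hopf boundary point lemma forces $P\le0$ throughout $M\times[0,T]$: a positive maximum cannot occur at $t=0$, cannot occur in the parabolic interior because $(\Delta-\partial_t)P\ge0$, and cannot occur on the lateral boundary without violating Hopf's lemma (one may handle the limiting equality case by the standard trick of replacing $P$ by $P-\varepsilon t$ for $\varepsilon>0$ and letting $\varepsilon\to0$). Rewriting $P\le0$ gives $\frac{|\nabla u|^2}{u^2}\le\frac1t\log\frac A u$, which is precisely \eqref{grad_sp_only_bdy}.

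The only step that is not routine is the boundary computation of $\frac{\partial}{\partial\nu}|\nabla u|^2$; everything else is either inherited from Theorem~\ref{thm_sp-only-global} or a standard application of the Hopf lemma. The computation itself is short, but it is the place where the geometric hypothesis $\lambda(t)\ge0$ (nonnegative mean curvature on an umbilic boundary) is actually used, and without it the sign in $\frac{\partial P}{\partial\nu}\le0$ would fail.
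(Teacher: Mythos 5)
Your proposal is correct and follows essentially the same route as the paper: the same auxiliary function $P=t\,|\nabla u|^2/u-u\log(A/u)$, the same interior inequality $(\Delta-\partial_t)P\ge 0$ borrowed from Theorem~\ref{thm_sp-only-global}, and the same boundary identity $\frac{\partial}{\partial\nu}|\nabla u|^2=-2\II(\nabla u,\nabla u)=-2\lambda(t)|\nabla u|^2\le 0$ feeding into the Hopf lemma. The only cosmetic difference is that you derive the boundary sign via the Hessian symmetry $\nabla^2u(\nu,\nabla u)=(\nabla u)\bigl(\tfrac{\partial u}{\partial\nu}\bigr)-\bigl(\nabla_{\nabla u}\nu\bigr)u$ and the Weingarten relation $\nabla_X\nu=\lambda X$, while the paper writes the same computation as $(D_\nu\nabla u)\cdot\nabla u=-\II(\nabla u,\nabla u)$.
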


\begin{remark}
Using the strong maximum principle and the Hopf maximum principle,
one can show that~$A$ is actually equal to
$\sup_{M\times[0,T]}u(x,t)$. Consequently, the right-hand side
of~\eqref{grad_sp_only_bdy} is well-defined.
\end{remark}

We emphasize that the Laplacian $\Delta$, the normal vector field
$\frac\partial{\partial\nu}$, the gradient $\nabla$, and the norm
$|\cdot|$ appearing above depend on the parameter~$t\in[0,T]$.

\begin{proof}
Introduce the function $P=t\frac{|\nabla u|^2}u-u\log\frac Au$. One
may repeat the computation from the proof of
Theorem~\ref{thm_sp-only-global} and conclude that
\begin{align}\label{heat_eq_P}
\left(\Delta-\frac\partial{\partial t}\right)P\ge0
\end{align}
for all $(x,t)\in M\times(0,T]$. Employing this inequality, we will
demonstrate that $P$ must be nonpositive. The assertion of the
theorem will immediately follow.

Fix $\tau\in(0,T]$. Let us prove that the function $P$ is
nonpositive on $M\times[0,\tau]$. If $P$ attains its largest value
at the point $(x,0)$ for some $x\in M$, then $P$ is less than or
equal to $-u\log\frac Au$ computed at $(x,0)$. In this case, $P$
must be nonpositive. Suppose this function attains its largest value
at the point $(x,t)$ for some $x$ in the interior of~$M$ and some
$t$ in the interval $(0,\tau]$. We then use
estimate~\eqref{heat_eq_P} and the strong maximum principle. They
imply $P$ must also assume its largest value at $(x,0)$. As a
consequence, $P$ is nonpositive. Thus, we only have to consider the
situation where this function has no maxima on $M\times[0,\tau]$
away from $\partial M\times(0,\tau]$. Unless this is the case, $P$
cannot become strictly greater than~0 anywhere.

Let $(x_0,t_0)\in\partial M\times(0,\tau]$ be a point where the
function $P$ attains its largest value on $M\times[0,\tau]$. The
Hopf maximum principle tells us that the inequality
\begin{align*}
\frac\partial{\partial\nu}P(x_0,t_0)>0
\end{align*}
holds true. But the Neumann boundary condition~\eqref{Neumann_BC}
and the second line of~\eqref{BVP_Ricci} imply
\begin{align*}
\frac\partial{\partial\nu}P&=t\left(\frac\partial{\partial\nu}|\nabla
u|^2\right)\frac1u-t\frac{|\nabla
u|^2}{u^2}\frac\partial{\partial\nu}u-
\left(\frac\partial{\partial\nu}u\right)\log\frac{A}u+\frac\partial{\partial\nu}u
\\ &=t\left(\frac\partial{\partial\nu}|\nabla
u|^2\right)\frac1u=2\frac
tu\left(D_{\frac\partial{\partial\nu}\,}(\nabla u)\right)\nabla u=
-2\frac tu\II(\nabla u,\nabla u) \\ &=-2\frac tu\,\lambda(t)|\nabla
u|^2\le0
\end{align*}
for all $(x,t)\in\partial M\times[0,\tau]$ (related computations
appear in~\cite{AP08} and~\cite[Chapter~IV]{RSSTY94}). Consequently,
$P$ must have a maximum on $M\times[0,\tau]$ away from $\partial
M\times(0,\tau]$. We conclude that $P$ is nonpositive on
$M\times[0,\tau]$. Since the number $\tau\in(0,T]$ can be chosen
arbitrarily, the same assertion holds on $M\times[0,T]$. The theorem
follows at once.
\end{proof}

\begin{remark}\label{rem_sp-on-bdy-ind-t}
Consider the case where the metric $g(x,t)$ does not depend on~$t$
and equations~\eqref{BVP_Ricci} are not assumed. Suppose the Ricci
curvature of $M$ is nonnegative and $\partial M$ is convex in the
sense that the second fundamental form of $\partial M$ is
nonnegative definite. Then the solution $u(x,t)$ of
problem~\eqref{heat_eq_bdy}--\eqref{Neumann_BC}
satisfies~\eqref{grad_sp_only_bdy}. This fact can be established by
the same argument we used to prove the theorem. The computation
leading to~\eqref{heat_eq_P} in this case may be found
in~\cite{RH93}.
\end{remark}

Our next estimate is similar to~\eqref{sp-tm-glob}. Henceforth, the
subscript $t$ denotes the derivative in~$t$. The number~$n$ is the
dimension of the manifold~$M$.

\begin{theorem}\label{thm_LY_bdy}
Let $\big(M,g(x,t)\big)_{t\in[0,T]}$ be a solution to the Ricci
flow~\eqref{BVP_Ricci}. Consider a smooth positive function
$u(x,t):M\times[0,T]\to\mathbb R$ satisfying the heat
equation~\eqref{heat_eq_bdy} with the Neumann boundary
condition~\eqref{Neumann_BC}. If $0\le\Ric(x,t)\le kg(x,t)$ for a
fixed $k>0$ and all $(x,t)\in M\times[0,T]$, then the estimate
\begin{align}\label{LY_bdy}
\frac{|\nabla u|^2}{u^2}-\frac{u_t}{u}\le kn+\frac{n}{2t}
\end{align}
holds for all $(x,t)\in M\times(0,T]$.
\end{theorem}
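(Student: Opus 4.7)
The plan is to follow the proof of Theorem~\ref{thm_sp-tm-global}, adapting the final step to accommodate the boundary in the spirit of the proof of Theorem~\ref{thm_space_bdy}. Write $f=\log u$ and $F_1=t(|\nabla f|^2-f_t)$. Lemma~\ref{lem_LY_comp} applied with $\alpha=1$, $k_1=0$, and $k_2=k$ yields
\begin{align*}
\left(\Delta-\frac\partial{\partial t}\right)F_1\ge -2\nabla f\nabla F_1+\frac{2a}{nt}F_1^2-\frac{F_1}t-\frac{tnk^2}{2(1-a)}
\end{align*}
for every $a\in(0,1)$. Fix $\tau\in(0,T]$ and let $(x_0,t_0)\in M\times[0,\tau]$ be a point at which $F_1$ attains its maximum. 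Since $\tau$ is arbitrary, it suffices to show $F_1(x_0,t_0)\le t_0kn+n/2$.

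If $t_0=0$ then $F_1(x_0,t_0)=0$ and the bound is trivial. If $t_0>0$ and $x_0$ lies in the interior of $M$, inserting $\nabla F_1=0$, $\Delta F_1\le 0$, $(F_1)_t\ge 0$ into the displayed inequality and optimizing the resulting quadratic bound in $a$ via $a=(1+kt_0)/(1+2kt_0)$ reproduces the estimate exactly as in the proof of Theorem~\ref{thm_sp-tm-global}. The remaining case is when every maximum of $F_1$ lies on $\partial M\times(0,\tau]$. At such a maximum the parabolic Hopf lemma forces $\partial F_1/\partial\nu(x_0,t_0)>0$, so the plan is to derive a contradiction by showing $\partial F_1/\partial\nu(x_0,t_0)\le 0$ directly from the boundary data.

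The boundary computation has two pieces. First, $\partial u/\partial\nu=0$ renders $\nabla f$ tangent to $\partial M$, so the umbilic condition $\II=\lambda(t)g$ gives $\partial|\nabla f|^2/\partial\nu=-2\lambda|\nabla f|^2$ by exactly the calculation used in the proof of Theorem~\ref{thm_space_bdy}. Second, differentiating $g(\nabla u,\nu)=0$ in $t$ yields $\nu(u_t)=-g(\nabla u,\dot\nu)$, where $\dot\nu=\partial\nu/\partial t$. Using $\partial g/\partial t=-2\Ric$ together with $g(\nu,X)=0$ for $X$ tangent to $\partial M$, one finds $g(\dot\nu,X)=2\Ric(\nu,X)$, so the tangential part of $\dot\nu$ is controlled by the mixed components $\Ric(\nu,\cdot)|_{T\partial M}$. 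Tracing the Codazzi--Mainardi identity for the umbilic boundary then gives $\Ric(\nu,X)=(n-2)\,d\lambda(X)$ (up to sign) for tangent $X$. Since $\lambda$ depends only on $t$, its spatial differential vanishes on $\partial M$, whence $\Ric(\nu,X)=0$ for every $X\in T\partial M$. Consequently $\dot\nu$ is purely normal, and combined with $g(\nabla u,\nu)=0$ this forces $\nu(u_t)=0$, i.e., $\partial f_t/\partial\nu=0$. Assembling, $\partial F_1/\partial\nu=-2t_0\lambda(t_0)|\nabla f|^2\le 0$ at $(x_0,t_0)$, the promised contradiction.

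The crux is the cancellation $\partial f_t/\partial\nu=0$, which rests on the observation that $\lambda$ depending only on~$t$ forces the mixed Ricci components $\Ric(\nu,X)$ to vanish along $\partial M$ via Codazzi. Without this feature of the umbilic condition, the contribution from the time evolution of the unit normal would not cancel and the normal derivative of $F_1$ would not possess the sign required to close the argument via Hopf. A minor technical point is verifying that the Hopf lemma genuinely applies in the presence of the nonlinear lower-order term $\frac{2a}{nt}F_1^2$ on the right-hand side of the differential inequality; this is handled by noting that if $F_1(x_0,t_0)>\tau kn+n/2$ then the lower-order expression is strictly positive in a neighborhood of the maximum for a suitable $a$, giving $(\Delta+2\nabla f\cdot\nabla-\partial_t)F_1>0$ there, which is the standard form in which Hopf applies.
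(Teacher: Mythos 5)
Your proposal is correct and follows essentially the same route as the paper's proof: the same reduction via Lemma~\ref{lem_LY_comp} with $\alpha=1$, the same optimal choice $a=(1+kt_0)/(1+2kt_0)$, and the same contradiction via the Hopf boundary point lemma, with both proofs ultimately resting on the Codazzi equation and the fact that $\lambda$ has no spatial dependence to kill the term coming from $\partial_t(\partial/\partial\nu)$. Your boundary computation is the coordinate-free version of the paper's local-coordinate calculation (the paper's identity $\partial_t\partial_\nu=R_{nn}g^{nn}\partial_\nu$ is precisely your statement that $\dot\nu$ is purely normal because $\Ric(\nu,X)=(n-2)X(\lambda)=0$ for tangent $X$), and is arguably a bit more transparent, but it is not a different argument.
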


\begin{remark}
We will make use of Lemma~\ref{lem_LY_comp} in the arguments below.
The proof of this lemma relies on local computations. Therefore, it
prevails on manifolds with boundary.
\end{remark}

\begin{proof}
Fix $\tau\in(0,T]$. Introduce the functions $f=\log u$ and
$F_1=t\left(|\nabla f|^2-f_t\right)$. Let us pick a point
$(x_0,t_0)\in M\times[0,\tau]$ where $F_1$ attains its maximum on
$M\times[0,\tau]$. We will demonstrate that the inequality
\begin{align}\label{ineq_F}
F_1(x_0,t_0)\le t_0kn+\frac n2
\end{align}
holds true. The assertion of the theorem will follow therefrom.

If $t_0=0$, then $F_1(x,t_0)=0$ for every $x\in M$ and
estimate~\eqref{ineq_F} is evident. Consequently, we assume $t_0>0$.
In accordance with Lemma~\ref{lem_LY_comp} and our conditions on the
Ricci curvature of $M$, the inequality
\begin{align*}
\left(\Delta-\frac\partial{\partial t}\right)F_1\ge-2\nabla f\nabla
F_1+\frac{2a}n\frac{F_1^2}{t_0}-\frac{F_1}{t_0}-\frac{t_0n}{2(1-a)}\,k^2
\end{align*}
holds for all $a\in(0,1)$ at the point $(x_0,t_0)$. Setting
$a=\frac{1+kt_0}{1+2kt_0}$ like in the proof of
Theorem~\ref{thm_sp-tm-global} and using the quadratic formula, we
see that
\begin{align}\label{heat_eq_F}
\left(\Delta-\frac\partial{\partial t}\right)F_1+2\nabla f\nabla
F_1\ge\left(F_1+\frac{nkt_0(1+2kt_0)}{2(1+kt_0)}\,\right)\left(F_1-t_0kn-\frac
n2\right).
\end{align}
If~\eqref{ineq_F} fails to hold, then the right-hand side
of~\eqref{heat_eq_F} must be strictly positive. We will now show
this is impossible.

Suppose $x_0$ lies in the interior of $M$. The fact that $(x_0,t_0)$
is a maximum point then yields $\Delta F_1(x_0,t_0)\le0$,
$\frac\partial{\partial t}F_1(x_0,t_0)\ge0$, and $\nabla
F_1(x_0,t_0)=0$. Hence the right-hand side of~\eqref{heat_eq_F}
cannot be strictly positive. Suppose now $x_0$ lies in the boundary
of~$M$. If the right-hand side of~\eqref{heat_eq_F} is indeed
positive, then the Hopf maximum principle tells us that the
inequality
\begin{align}\label{ddnu_F>0}
\frac\partial{\partial\nu}F_1>0
\end{align}
holds at $(x_0,t_0)$. We will make a computation to show this cannot
be the case.

Fix a system $\{y_1,\ldots,y_n\}$ of local coordinates in a
neighborhood $U$ of the point $x_0$ demanding that $U\cap\partial
M=\{x\in U\,|\,y_n(x)=0\}$. We write $g_{ij}$ and $R_{ij}$ for the
corresponding components of the metric and the Ricci tensor.
Clearly, they depend on the parameter~$t$. Without loss of
generality, assume $\frac{\partial}{\partial
y_1}\,,\ldots,\frac{\partial}{\partial y_{n-1}}$ are all orthogonal
to $\frac{\partial}{\partial y_n}$ on the boundary with respect to
$g(x,t_0)$. It is easy to see that
\begin{align}\label{outw_norm}
\frac\partial{\partial\nu}=-\sum_{i=1}^n\frac{g^{in}}{(g^{nn})^\frac12}\frac\partial{\partial
y_i}\,
\end{align}
in $U\cap\partial M$. Here, $g^{ij}$ are the components of the
matrix inverse to~$(g_{ij})_{i,j=1}^n$.

The Neumann boundary condition~\eqref{Neumann_BC} implies
$\frac\partial{\partial\nu}f=0$. Utilizing this fact, we obtain
\begin{align*}
\frac\partial{\partial\nu}F_1&=t\left(\frac\partial{\partial\nu}|\nabla
f|^2-\frac\partial{\partial\nu}f_t\right)=t\left(2\left(D_{\frac\partial{\partial\nu}\,}
(\nabla f)\right)\nabla f-\frac\partial{\partial\nu}f_t\right) \\
&=t\left(-2\II(\nabla f,\nabla f)+\left(\frac\partial{\partial
t}\frac\partial{\partial\nu}\right)f-\frac\partial{\partial
t}\left(\frac\partial{\partial\nu}f\right)\right) \\
&=t\left(-2\II(\nabla f,\nabla f)+\left(\frac\partial{\partial
t}\frac\partial{\partial\nu}\right)f\right).
\end{align*}
For related computations, see~\cite{AP08}
and~\cite[Chapter~IV]{RSSTY94}. According to~\eqref{outw_norm} and
the first formula in~\eqref{BVP_Ricci}, the equality
\begin{align*}
\frac\partial{\partial
t}\frac\partial{\partial\nu}&=-\frac1{g^{nn}}\sum_{i=1}^n\left(\left(\frac\partial{\partial
t}g^{in}\right)(g^{nn})^\frac12-\frac{\frac\partial{\partial
t}g^{nn}}{2(g^{nn})^{\frac12}}g^{in}\right)\frac\partial{\partial
y_i}
\\ &=-\sum_{i,j,l=1}^n\left(\frac{2R_{jl}g^{ji}g^{nl}}{(g^{nn})^{\frac12}}
-\frac{R_{jl}g^{jn}g^{nl}g^{in}}{g^{nn}(g^{nn})^\frac12}\right)\frac\partial{\partial
y_i}
\end{align*}
holds in $U\cap\partial M$. A calculation based on the Codazzi
equation and the second line in~\eqref{BVP_Ricci} then implies
\begin{align*}
\frac\partial{\partial
t}\frac\partial{\partial\nu}&=R_{nn}g^{nn}\frac\partial{\partial\nu}
\end{align*}
near $x_0$ at time $t_0$. Here, we make use of that fact that
$\frac{\partial}{\partial y_1},\ldots,\frac{\partial}{\partial
y_{n-1}}$ are orthogonal to $\frac{\partial}{\partial y_n}$ on the
boundary with respect to $g(x,t_0)$. Combining the above equalities,
we conclude that
\begin{align*}
\frac\partial{\partial\nu}F_1&=t_0\left(-2\II(\nabla f,\nabla f)
+R_{nn}g^{nn}\frac\partial{\partial\nu}f\right)
\\ &=-2t_0\II(\nabla f,\nabla
f)=-2t_0\lambda(t_0)|\nabla f|^2\le0
\end{align*}
at the point $(x_0,t_0)$. But this contradicts~\eqref{ddnu_F>0}.
Thus, the right-hand side of~\eqref{heat_eq_F} cannot be strictly
positive, and our assumption that~\eqref{ineq_F} failed to hold must
have been false.

Because $(x_0,t_0)$ is a maximum point for $F_1$ on
$M\times[0,\tau]$, it is easy to see that
\begin{align*}
F_1(x,\tau)\le F_1(x_0,t_0)\le t_0kn+\frac n2\le\tau kn+\frac n2
\end{align*}
for any $x\in M$. Consequently,
\begin{align*}
\frac{|\nabla u|^2}{u^2} -\frac{u_t}{u}\le kn+\frac{n}{2\tau}
\end{align*}
at $(x,\tau)$. Since the number $\tau\in(0,T]$ can be chosen
arbitrarily, this yields the assertion of the theorem.
\end{proof}

\section*{Acknowledgements}
Xiaodong Cao wishes to thank Professor Qi Zhang for useful
discussions. X.C.'s research is partially supported by NSF grant
DMS~0904432. Artem Pulemotov is grateful to Professor Leonard Gross
for helpful conversations. A large portion of this paper was written
when A.P. was a graduate student at Cornell University. At that
time, he was partially supported on Professor Alfred Schatz's NSF
grant DMS~0612599.

\end{document}